\renewcommand {\epsilon}{\varepsilon}
\newcommand{\DD}{\mathbb{D}}
\newcommand{\EE}{\mathbb{E}}
\newcommand{\NN}{\mathbb{N}}
\newcommand{\PP}{\mathbb{P}}
\newcommand{\RR}{\mathbb{R}}
\newcommand{\bB}{\mathcal{B}}
\newcommand{\cC}{\mathcal{C}}
\newcommand{\nN}{\mathcal{N}}
\newcommand{\wW}{\mathcal{W}}
\newcommand{\fB}{\mathfrak{B}}
\newcommand{\fC}{\mathfrak{C}}
\newcommand{\al}{\alpha}
\newcommand{\la}{\lambda}
\newcommand{\om}{\omega}
\newcommand{\Om}{\Omega}
\newcommand{\lv}{\langle}
\newcommand{\rv}{\rangle}
\newcommand{\pd}{\partial}
\newcommand{\ra}{\rightarrow}
\newcommand{\lra}{\longrightarrow}
\newcommand{\ti}{\tilde}
\newcommand{\vzv}{\Leftrightarrow}
\newcommand{\ds}{\displaystyle}
\newcommand{\ind}{\mathbf{1}}
\newcommand{\lqq}{\leqslant}
\newcommand{\gqq}{\geqslant}
\newtheorem{thms}{Theorem}[section]
\newtheorem{defn}[thms]{Definition}
\newtheorem{thm}{Theorem}[section]
\DeclareMathSymbol{\ophi}{\mathalpha}{letters}{"1E}
\renewcommand{\phi}{\varphi}
\newcommand{\be}{\begin{equation}}
\newcommand{\ee}{\end{equation}}
\newcommand{\ben}{\begin{equation*}}
\newcommand{\een}{\end{equation*}}
\newcommand{\ba}{\begin{equation}\begin{aligned}}
\newcommand{\ea}{\end{aligned}\end{equation}}
\DeclareMathOperator{\wnq}{\mathrm{w}_{n}}
\DeclareMathOperator{\wns}{\mathrm{w}_{n}^*}
\DeclareMathOperator{\wnt}{\widetilde{\mathrm{w}_{n}}}
\newenvironment{proof}{\par\noindent{\bf Proof:}}{\hfill$\blacksquare$\par}
\newfont{\cyrfnt}{wncyr10}
\def\J3{\cyrfnt{\rm \u{\cyrfnt I}}}
\def\j3{\cyrfnt{\rm \u{\cyrfnt i}}}
\definecolor{DarkGreen}{rgb}{0.1,0.7,0.3}   %define a custom color
\definecolor{DarkGreen}{rgb}{0.1,0.7,0.3}   %define a custom color
\date{\today}
\begin{document}
\thispagestyle{plain}

\title{
How close are time series to power tail L\'evy diffusions? 
}

\author{
Jan M. Gairing
\footnote{Institut f\"ur Mathematik, Humboldt-Universit\"at zu Berlin, Germany; gairing@math.hu-berlin.de} 
\hspace{0.5cm}
Michael A. H\"ogele
\footnote{Departamento de Matem\'aticas, Universidad de los Andes, Bogot\'a, Colombia; ma.hoegele@uniandes.edu.co}
\hspace{0.5cm}
Tania Kosenkova 
\footnote{Institut f\"ur Mathematik, Universit\"at Potsdam, Germany; kosenkova@math.uni-potsdam.de} 
\hspace{0.5cm}
Adam H. Monahan
\footnote{School of Earth and Ocean Sciences, University of Victoria, Victoria BC, Canada; monahana@uvic.ca}
}

 \maketitle

\begin{abstract}
This article presents a new and easily implementable 
method to quantify the so-called coupling distance 
between the law of a 
time series and the law of a differential equation 
driven by Markovian additive jump noise with heavy-tailed jumps, 
such as $\alpha$-stable L\'evy flights. 
Coupling distances measure the proximity 
of the empirical law of the tails of the jump increments 
and a given power law distribution. 
In particular they yield an upper bound for 
the distance of the respective laws on path space. 
We prove rates of convergence comparable 
to the rates of the central limit theorem 
which are confirmed by numerical simulations. 
Our method applied to a paleoclimate time series of glacial climate variability confirms 
its heavy tail behavior. In addition this approach gives evidence 
for heavy tails in data sets of precipitable water vapor 
of the Western Tropical Pacific. 
\end{abstract}

\noindent \textbf{Keywords:} 
Time series analysis; 
Statistics of Markovian jump diffusions; 
Power law distributions; 
L\'evy flights; 
Wasserstein distance; \\

\section{Introduction}\label{sec: intro}

 Understanding the nature of noise in data is of paramount interest in nonlinear dynamics 
 in order to build statistically trustworthy models, whose study leads to relevant new theoretical insights. 
 In this context stochastic modeling consists in the study of deterministic and stochastic differential equations, 
 which represent on the one hand the mechanistic understanding of the underlying phenomenon and 
 on the other different kinds of noise accounting the effect of unresolved processes. 
 In various applications the fluctuations present in time series of interest 
 exceed the plausible thresholds for an underlying Gaussian white noise perturbation.
 The continuity of Markovian Gaussian models given as stochastic differential equations 
 makes it necessary to go beyond the Gaussian 
 paradigm to model random fluctuations and to include the effect of shocks. 
 A natural class of Markovian perturbations containing discontinuous paths 
 is given by so-called L\'evy processes, sometimes referred to as L\'evy flights, 
 which are non-Gaussian and discontinuous extensions of Brownian motion 
 which retain the white noise structure of stationary, $\delta$-correlated increments. 
 While some such processes enjoy scaling properties 
 (for instance $\alpha$-stable processes),  
 scaling is not a generic property of L\'evy processes, (Section \ref{sec: Levy and coupling}). 
 
 In this article we present a new statistical method to estimate the proximity of 
 time series to the paths of a given diffusion model with additive jump L\'evy noise. 
 In essence this is the comparison (in $L^2$-sense) 
 of a discrete approximation of a path and ``typical'' paths of such a model. 
 We stress that the path-wise comparison 
 is the strongest possible form of comparison between stochastic processes. 
 Our method consists of two steps. 
 
 The first step in this method uses  
 our main theoretical result (provided in \cite{GHKK14}), which states that the distance 
 of the paths can be estimated using so-called coupling distances 
 between the underlying L\'evy jump measures.
 This distance, which compares 
 the distributions of \textit{instantaneous} jump increments of the processes, 
 is based on appropriately scaled Wasserstein distances. Wasserstein distances between two probability distributions 
 are well-known in the mathematics literature \cite{RR1998} and measure the ``optimal $L^2$ transport'' between 
 the two distributions. In other words, they minimize on the product space 
 the deviation from the diagonal (in $L^2$-sense) over all joint distributions of the two ``marginal'' distributions. 
 We emphasize that an estimate of the distance 
 of temporal path statistics using a distance only of the instantaneous jump distributions 
 is not entirely straight-forward. 
 First, there are in general infinitely many 
 of such jumps in any finite time interval and the deterministic 
 motion does not mitigate the effect of jumps or can even enhance it. 
 Furthermore, even in the case of finitely many jumps, the jumps of two processes 
 beyond a common threshold typically occur with different intensities.  
 In order to compare such paths with different ``random clocks'' 
 we have to ``synchronize'' them in order to compare the 
 jump distributions. 
 As a toy example of our result 
 imagine two compound Poisson processes with joint intensity $\la>0$, 
 with different jump distributions $\mu_1$ and $\mu_2$. 
 Then our main result states that on a finite time horizon 
 the path statistics of these two processes (in the $L^2$-norm of the supremum distance) 
 can be estimated using the difference of the coupling distance of $\mu_1$ and $\mu_2$. 

 The second step in our method exploits the following advantage of the notion of a coupling-distance.  
 Since its main ingredients are the so-called Wasserstein distances, 
 it is statistically tractable in that we can use well-established 
 results on rates of convergence. 
 In \cite{GHKK16} the notion of coupling distance is shown to be weak enough (in a topological sense) 
 to allow for an efficient statistical assessment of a given time series within a class of models. 
 In this study we prove the most relevant version 
 of the original result on the rates of convergence 
 and illustrate these results by using simulations. 
 In Section \ref{sec: simulations} 
 we demonstrate how the rates of convergence depend on 
 the sample size. The statistical method is particularly useful 
 for the comparison of compound Poisson (finite intensity) processes 
 and for identifying power law distributions in time series, 
 as will be demonstrated using meteorological and climatological examples in Section \ref{sec: climate examples}. 
 The first set of time series considered is a the well-studied example of the 
 Greeland Ice Core Project (GRIP) paleoclimatic proxies 
 indicating abrupt transitions between stadials and interstadials 
 of the last glacial period. 
 In \cite{Dit99a} Ditlevsen identified 
 an $\alpha$-stable L\'evy component in the GRIP calcium concentration record 
 through consideration of the statistics of time series increments. 
 Further investigation in that direction has been carried out 
 in \cite{GI14} and \cite{HIP09} exploiting the scaling properties of these processes. 
 As a second example we consider long records of precipitable water vapor 
 in the Western Tropical Pacific (Manus and Nauru) 
 which are governed by dynamics with a strong threshold effect and exhibit clearly 
 recognizable heavy-tailed patterns. 
 Our approach is more general than those used previously. 
 In particular, it does not rely on scaling properties of the time series. 
 A preliminary investigation of heavy tail behavior in time series in \cite{GHKK16}  
 is systematically extended in this article, resulting in a robust and well-tested 
 tool for the detection of polynomial tails in time series. 
 
 The article is organized as follows. 
 In Section \ref{sec: Levy and coupling} we introduce L\'evy processes 
 and motivate the notion of Wasserstein distance between probability measures 
 and coupling distance between L\'evy measures before stating the main theoretical estimate proved in \cite{GHKK14}. 
 In Section \ref{sec: stat and theory} we describe the implementation of the coupling distance 
 and derive asymptotic rates of convergence. 
 Section \ref{sec: simulations} considers the application of the method to synthetic time series, and includes 
 sensitivity analyses related to the data size and model parameters. 
 In Section \ref{sec: climate examples} 
 we analyze climatic time series to assess evidence of heavy tailed behavior.

\bigskip
\section{Coupling distances between L\'evy measures}\label{sec: Levy and coupling}

\subsection{L\'evy processes } \label{subsec: Levy}
L\'evy processes are the common generalization of Poisson processes 
and Brownian motion. 
Both processes have statistically independent and stationary increments.  
In the case of a Poisson process $P = (P(t))_{t\gqq 0}$, 
(with intensity $\la>0$) given by $P(t) - P(s) \sim P(t-s) \sim \mbox{Poi}(\la(t-s))$, 
and in the case of a (standard, scalar) Brownian motion $B = (B(t))_{t\gqq 0}$ with 
$B(t) - B(s) \sim B(t-s) \sim \nN(0, t-s)$. 
In addition, each of these processes starts at $0$ and 
has at least right-continuous paths, 
which enjoy left limits at each point in time $t\gqq 0$. 
The concept of a L\'evy process lifts the 
assumption of an explicitly prescribed stationary increment distribution.  
A L\'evy process $L = (L(t))_{t\gqq 0}$ 
is a stochastic process starting at $0$ 
with independent and stationary increments in the sense that 
$L(t) - L(s) \sim L(t-s)$ and that has trajectories 
which are right-continuous $L(t+) = L(t)$ 
and have left limits in each point of time. 
That is, $L(t-)$ exists, which yields 
a well-defined jump size $\Delta_t L = L(t) - L(t-)$. 
This type of path yields for each point in time $t$ 
the decomposition of the current state 
$L(t) = L(t-) + \Delta_t L$ into the ``predictable'' (that is, continuously 
accessible) part $L(t-)$ of $L(t)$ 
(w.r.t. the underlying flow of information)  
and the ``non-anticipating'' jump increment $\Delta_t L$ of $L(t)$ 
(w.r.t. the underlying flow of information). 
The jump increment is statistically independent from $L(s)$ for all $s<t$ 
and hence also of $L(t-)$ 
due to the independence of increments.
L\'evy processes are well-studied objects 
in the mathematics literature. 

The so-called L\'evy-It\^o decomposition states that 
any given L\'evy process $L = (L(t))_{t\gqq 0}$ in $\RR^k$
can be decomposed path-wise 
into the sum of the following four unique and independent components 
\begin{align}\label{eq: Levy-Ito}
L(t) = a t + A^{1/2} B(t) + C(t)^\rho + J^\rho(t) \qquad~\mbox{ for all }~ t\gqq 0 \quad \PP-\mbox{a.s.}
\end{align}
The vector $a \in \RR^k$ accounts for the linear drift,  
$B = (B(t))_{t\gqq 0}$ is a $d$-dimensional standard Brownian motion, which comes 
with a non-negative definite, symmetric covariance matrix $A \in \RR^{d\otimes d}$. 
The processes $(C^\rho(t))_{t\gqq 0}$ and $(J^\rho(t))_{t\gqq 0}$ are pure jump processes 
which share as a common parameter the so-called L\'evy measure $\nu$ and a threshold $\rho>0$.
The L\'evy measure $\nu$ is given as a sigma-finite measure on the Borel sets 
$\nu: \fB(\RR^k) \ra [0, \infty]$ satisfying for any $\rho>0$ that 
\begin{equation}\label{def: Levy mass}
\nu(\{0\}) = 0, \qquad \int_{\bB_\rho(0)} |x|^2 \nu(dx) < \infty \qquad 
\mbox{ and }\qquad \nu(\RR^k\setminus \bB_\rho(0)) < \infty,  
\end{equation}
where $\bB_\rho(0) = \{y\in \RR^k~|~|y|< \rho\}$. 
For each fixed $\rho>0$ the measure $\nu$ parametrizes two associated processes in the following way.  
The first process, $C^\rho = (C^\rho(t))_{t\gqq 0}$, is a compound Poisson process  
with Poisson jump intensity $\la_\rho = \nu(\RR^k \setminus \bB_\rho(0))$ 
and jump distribution $\nu_\rho$ 
\begin{equation}\label{eq: cpp measure}
\fB(\RR^k) \ni E \mapsto \nu_\rho(E) :=\frac{\nu(E \cap (\RR^k \setminus \bB_{\rho}(0)))}{\la_\rho}.
\end{equation}
That is, $C^\rho$ is a Markovian pure jump process with memoryless and hence exponentially distributed waiting times between 
consecutive jumps of intensity $\la_\rho$, 
which are independent and distributed according to $\nu_\rho$ in (\ref{eq: cpp measure}). 
Note that the jumps of $C^\rho$ are bounded away from zero by $\rho$. 
The second process is another Markovian pure jump process $J^\rho = (J^\rho(t))_{t\gqq 0}$, 
whose jumps are bounded from above by $\rho$ with possibly infinite intensity.
The process $(J^\rho(t))_{t\gqq 0}$ can be understood as follows. 
The Fourier transform of the marginal law $L_t$ 
is given by the so-called L\'evy-Chinchine decomposition 
\begin{align*}
& \EE[\exp(i \lv u, L(t)\rv)] = \exp(t \Psi(u)), \qquad t\gqq 0, u\in\RR^k,
\\[2mm]
&\mbox{ with }\\
& \Psi(u) = i\lv u, a\rv -\frac{1}{2}\lv a, Aa\rv + 
\int_{\RR^k\setminus \bB_{\rho}(0)} \big[e^{i\lv u, y\rv} -1\big] \nu(dy)
+ \int_{\bB_{\rho}(0)} \big[e^{i\lv u, y\rv} -1 - i\lv u, y\rv \big] \nu(dy), \qquad u\in \RR^k.
\end{align*}
This representation tells us that $J^\rho = (J^\rho(t))_{t\gqq 0}$ can be understood as the superposition 
of independent compensated (i.e. re-centered) compound Poisson processes $J^\rho(t) = \sum_{j=1}^\infty (\ti C^j(t) - t \int_{R_j} y \nu(dy))$,  
where the compound Poisson processes $\ti C^j$ take jumps with values in rings given 
by \mbox{$R_j = \{\rho_j< |y|\lqq \rho_{j-1}\}$,} distributed as $E\mapsto \frac{\nu(E \cap R_j)}{\nu(R_j)}$ with intensity $\nu(R_j)<\infty$.  
The sequence $\rho = \rho_0  > \rho_1 > \dots > 0$ of radii is strictly decreasing and  $\rho_j \searrow 0$  
such that $\bB_{\rho}(0)\setminus \{0\} = \bigcup_{j\in \NN_0} R_j$ 
with joint (possibly infinite) intensity
\[
\sum_{j\in \NN_0} \nu(R_j) = \nu(\bB_{\rho}(0)) \lqq \infty.
\]
Note that if the total jump intensity $\nu(\bB_{\rho}(0))< \infty$ we can choose $\rho =0$ and hence $J^\rho \equiv 0$. 

\paragraph{The case of $\alpha$-stable processes: } 
The physically most familiar class of L\'evy processes beyond the Poisson process and Brownian motion 
are the so-called stable processes, sometimes 
also referred to as L\'evy flights. 
They have a parameter $\al\in (0, 2)$ and enjoy the following 
scaling property 
\begin{align*}
\frac{1}{c^\frac{1}{\al}} L(c t) \stackrel{d}{=} L(t), \qquad \mbox{ for all }c>0 \mbox{ and }t\gqq 0.
\end{align*}
$\alpha$-stable processes 
are pure jump processes of infinite intensity 
with the heavy-tailed L\'evy measure 
\begin{align}\label{eq: stable Levy measure}
\nu(dz) = \frac{c_-}{|z|^{1+\alpha}} \ind\{z<0\} +  \frac{c_+}{z^{1+\alpha}} \ind\{z>0\}.
\end{align}
such that $c_+ + c_->0$. We make the following remarks on basic properties of $\alpha$-stable processes: 
\begin{enumerate}
 \item The situation that $c_- \neq c_+$ implies that 
the L\'evy measure is asymmetric 
and also produces asymmetric $\alpha$-stable marginal laws, 
which however turn out to be still self-similar. 
For more on this property we refer to Sato \cite{Sato-99}. 
 \item The marginal distributions have all smooth densities. 
Closed forms of the densities however are only known in special cases. 
For instance the $(\alpha=1)$-stable process is known to be necessarily 
symmetric $c = c_- = c_+$ and is given by the standard Cauchy process. 
 \item All $\alpha$-stable processes are pure jump processes and 
only allow finite moments of order $p<\al<2$, as is evident from 
the equivalent integrability condition $\int_{|z|>1} |z|^p \nu(dz) < \infty$. 
 \item The degenerate case of a strictly asymmetric L\'evy measure (\ref{eq: stable Levy measure}) given for instance 
for $c_- = 0$ and $c_+ =1$ yields a jump process with jumps only in positive direction. 
In this case the $\alpha = \frac{1}{2}$-stable L\'evy process is called the L\'evy subordinator.  
It can be constructed as the random clock given as the points in time 
that a Brownian motion $B$ catches up with the linear function $t\mapsto t/\sqrt{2}$ 
\[
t \mapsto T_t := \inf\{s>0~|~B(s)> \frac{t}{\sqrt{2}}\}. 
\]
For further details on the distribution and density on this relation we refer to \cite{Sato-99}. 
 \item The limiting case of $\alpha = 2$ is also 
necessarily symmetric and corresponds to a Brownian motion. 
However Brownian motion enjoys very different properties 
than stable processes for parameter $0< \al < 2$.  
For instance, it has continuous sample paths and exponential moments. 

\end{enumerate}

\subsection{Coupling distance} 
\paragraph{The standard case of the coupling distance between L\'evy measures: } 

In the article \cite{GHKK14} the authors construct a special metric, 
the so-called coupling distance,  
on the set of L\'evy measures $\nu$ in $\RR^k$. 
This metric exploits the fact that for any $\rho>0$ the tail-measure 
$\nu_\rho$ as defined before is a probability measure. 
Hence, having two L\'evy processes $L$ and $L'$ with 
L\'evy measures $\nu$ and $\nu'$, the basic idea is now  
to find cutoffs $\rho$ and $\rho'$ such that the intensities $\la_\rho$ and $\la'_{\rho'}$ are 
equal, that is the $C^\rho$ and $C'^{\rho'}$ have the same intensity.  
Then, to compare the jump measures 
it is natural to compare the probability measures $\nu_\rho$ and $\nu'_\rho$, 
which we will denote by $\mu$ and $\mu'$ for short. 

\paragraph{The Wasserstein distance: } 
In our case the comparison of two given probability distributions $\mu$ and $\mu'$ relies on the 
so-called Wasserstein distance between $\mu$ and $\mu'$ on $\RR^k$.  
For the construction of the Wasserstein distance between $\mu$ and $\mu'$ 
we consider all joint distributions $\Pi$ on the product space $\RR^k \times \RR^k$ 
having these two marginal distributions, that is 
$\Pi(E\times \RR^k) = \mu(E)$ and $\Pi(\RR^k\times E) = \mu'(E)$ for any $E\in \bB(\RR^k)$. 
If $\mu = \mu'$, then the special joint distribution $\Pi(E\times E') = \mu(E\cap E')$ concentrates all the mass 
on the diagonal $\{(x, x)~|~x\in \RR^k\}$ of the product space. 
In particular the mean-square of the distance from the diagonal 
$\int_{\RR^k\times \RR^k} |x-y|^2 \Pi(dx,dy) = 0$. 
If $\mu \neq \mu'$ there is obviously not such a joint distribution, that is 
$\int_{\RR^k\times \RR^k} |x-y|^2 \Pi(dx,dy) >0$. 
However, it turns out to be a good measure of proximity of $\mu$ and $\mu'$ 
to minimize over all joint distributions or \textit{couplings} of $\mu$ and $\mu'$ given by 
\begin{align}
\fC(\mu, \mu') &:= \Big\{\Pi: \fB(\RR^k) \otimes \fB(\RR^k) \ra [0, 1] \mbox{ probability measure, with  }\nonumber\\
&\qquad ~\Pi(E \times \RR^k) = \mu(E), \quad \Pi(\RR^k \times E) = \mu'(E) \quad \mbox{ for all } E \in \fB(\RR^k)\Big\}.
\label{eq: def coupling}
\end{align}
The Wasserstein metric of order~$2$ between two probability measures $\mu, \mu'$ 
on the Borel sets of $\RR^k$ is defined by 
\begin{align*}
W_{2}(\mu, \mu') := \inf_{\Pi \in \fC(\mu, \mu')} \Big(\int_{\RR^k \times \RR^k} |x-y|^2 \Pi(dx, dy)~\Big)^\frac{1}{2}.
\end{align*}
Any minimizer $\Pi^*$ is referred to as \textit{optimal coupling} between $\mu$ and $\mu'$. 
It is obvious that in terms of random vectors $(X, X')$ with $X\sim \mu$ and $X'\sim \mu'$ and joint distribution $\Pi$
the Wasserstein-distance has the following equivalent form 
\begin{align*}
W_{2}(\mu, \mu') := \inf_{\substack{\Pi \in \fC(\mu, \mu')\\ (X, X')\sim\Pi }} \EE_\Pi\big[|X-X'|^2\big]^{\frac{1}{2}}.
\end{align*}

\paragraph{Properties of the Wasserstein distance: } 
It is well-known in the mathematics literature \cite{RR1998} that 
the convergence $W_{2}(\mu_n, \mu) \ra 0$ is equivalent to 
the weak convergence of the laws $\mu_n\rightharpoonup \mu$, 
that is, the convergence $\mu_n(B)\lra \mu(B)$ for all ``reasonable'' events $B$ 
and the convergence of the second moments. 
In this work we will consider the following case. 
The optimal coupling between $\mu$ and $\mu'$ is not in general known explicitly, 
with the notable exception of $k=1$, where a parametrization of the optimal coupling $\Pi$ 
is given as follows. 
On $(\RR, |\cdot|)$ one can show that for two distribution functions $F(x) = \mu((-\infty, x])$ 
and $F'(x) = \mu'((-\infty, x])$ the optimal coupling is realized by the random vector 
\begin{equation}\label{eq: optimal coupling}
(X,Y) = (F^{-1}(U), (F')^{-1}(U)),
\end{equation}
where $F^{-1}$ and $(F')^{-1}$ are the quantile functions of $F$ and $F'$ and 
$U$ has the uniform distribution in $[0, 1]$. 
Therefore the Wasserstein metric is easily evaluated by 
\begin{equation}\label{eq: optimal Wasserstein}
W_2^2(\mu, \mu') = \int_0^1 |(F^{-1}(u)- (F')^{-1}(u)|^2 du.  
\end{equation}
It is worth noting that the optimality of the pair $(X, Y)$ 
holds only with respect to the specific (spatial) metric $d(x, y) = |x-y|$ on $\RR$. 
The law of $(X, Y)$ is obviously a coupling of $\mu$ and $\mu'$.  
For metrics other than $d$ the right-hand side of (\ref{eq: optimal Wasserstein}) 
is in general an upper bound. 
If we consider the Wasserstein distance 
for other metric, such as 
the cut-off metric $d_s(x, y) = \min(|x-y|\wedge s)$ for some $s>0$ on $\RR$, 
we lose the result that the specific coupling (\ref{eq: optimal coupling}) is minimal. 
However, since the Wasserstein distance is given as the infimum over all couplings 
with respect to $d$ the means square distance 
with (\ref{eq: optimal coupling}) yields at least an upper bound. 
The real line with the cut-off metric $d_s$ will be the space where we approximate the laws of the jumps, 
since we are considering processes which may not have second moments. 
For further results we refer to \cite{GHKK14} and \cite{GHKK16}.

Note that Wasserstein distances can be considered on much more general spaces than $\RR^k$. 
For instance consider two stochastic processes $(X̣^j_t)_{t\in [0, 1]}$ with continuous trajectories, 
so $\mbox{Law}(X^j)$ is a distribution on $(\cC([0, 1]), \bB(\cC([0,1]))$. Then 
\[
W_2(\mu, \mu') = \inf_{\Pi \in \fC(\mu, \mu')} \Big(\int_{\cC([0, 1]) \times \cC([0, 1])} d(x, y)^2 \Pi(dx, dy)~\Big)^\frac{1}{2}
\] 
is well-defined in the usual sense, where $d(x, y)$ is any complete metric in $\cC([0,1])$, 
for instance $d(x,y) =\|x-y\|_\infty =\sup_{t\in [0, 1]} |x(t)-y(t)|$ or $d(x,y) = \|x-y\|_\infty \wedge s = \sup_{t\in [0, 1]} d_s(x(t), y(t))$. 
The same is true for the more general space $\DD([0, 1])$ of right-continuous functions with left limits $v: [0, 1] \ra \RR$, that is satisfying 
$v(t+) = v(t)$ and $v(t-) \in \RR$ for any $t\in [0, 1]$.  
This is the natural space in which L\'evy processes in the sense of (\ref{eq: Levy-Ito}) 
have their paths. Analogously to the case of continuous functions, the Wasserstein distance is well-defined. 
For instance, for two L\'evy processes $L$ and $L'$ the Wasserstein distance 
between their laws $\mu = \mbox{Law}(L)$ and $\mu' = \mbox{Law}(L')$ in $\DD([0,1])$ is given by 
\begin{equation}\label{eq: WasserOnSkorohod}
W_{2, d}(\mu, \mu') = \inf_{\Pi \in \fC(\mu, \mu')} \Big(\int_{\DD([0, 1]) \times \DD([0, 1])} d(x, y)^2 \Pi(dx, dy)~\Big)^\frac{1}{2}
\end{equation}
using for identical metrics $d$ as before.

\paragraph{The definition of the coupling semimetric and the coupling distance, standard case: } 
We are now in the position to define  
the coupling semimetric family and the coupling distance between two L\'evy measures $\nu$ and $\nu'$. 
\begin{defn}\label{def: coupling distance}
Given two absolutely continuous L\'evy measures 
$\nu = f dx$ and $\nu'= f' dx$ on $\RR^k$ and 
a given intensity $0< \la < \min(\nu(\RR^k), \nu'(\RR^k))$ we define the 
cutoffs
\[
\rho(\la) := \inf\{r>0~|~ \nu(\RR^k\setminus \bB_r(0)) \lqq \la\}
\]
and $\rho'(\la)$ analogously.
We then introduce a family of semi-metrics $T_\la$ 
\[
T_\la(\nu, \nu') := \la^{\frac{1}{2}}\; W_{2}(\nu_{\rho(\la)}, \nu'_{\rho'(\la)}).
\]
\end{defn}
First note that the absolute continuity yields that 
given $\nu$ and intensity $\la>0$ 
we have that the mass of the L\'evy measure outside $\rho(\la)$ 
is just $\la$, that is formally $\la_{\rho(\la)} = \la$, 
with $\la_\rho$ defined before (\ref{eq: cpp measure}). 
Intuitively, given L\'evy measures $\mu$ and $\mu'$, 
the semi-metric $T_\la(\nu, \nu')$ compares the jump distributions 
$\nu_{\rho(\la)}$ and $\nu'_{\rho'(\la)}$  of two compound Poisson processes with common intensity  
$\la= \la_{\rho(\la)} = \la_{\rho'(\la)}$. 

A word about the renormalization pre-factor $\la^{\frac{1}{2}}$:  
roughly speaking, it comes from the fact that an increasing intensity $\la$ 
makes the cutoffs $\rho$ approach $0$. 
However around $0$ the L\'evy measure has a well-known ``worst-case'' pole 
associated with the integrability condition $\int_{-1}^1 z^2 \nu(dz) <\infty$ 
of the L\'evy-Chinchine decomposition, which has to be satisfied for any L\'evy measure. 
The pre-factor $\la^{1/2}$ allows the coupling distance to remain finite even for large $\la$. 
We illustrate why this is the case using the one-sided L\'evy measure  
$\nu(dz) = \frac{dz}{|z|^{1+\alpha}} \ind_{(0, 1)}(z)$, 
$z\in (0,1)$ for some $\alpha\in (0,2)$. 
\footnote{Strictly, one actually has to consider the two sided case, 
since the L\'evy-Chinchine decomposition for one-sided case 
is slightly more restrictive, see for instance Applebaum \cite{Ap09}.}
In this case we have $\nu(0, 1) = \infty$. 
Consider now two L\'evy measures $\nu, \nu'$ of this type with 
exponents $\al, \al'\in (0,2)$. 
Calculating their optimal coupling explicitly we observe the following. 	
For given intensity $\la\gg 1$ we calculate the cutoff $\rho$ of $\nu$ as
\begin{align*}
\la = 2 \int_\rho^1 \frac{dz}{z^{1+\al}} = \frac{2}{\al}\Big(\frac{1}{\rho^\al}-1\Big) \quad \vzv \quad 
\rho = \Big(\frac{2}{2+\al \la}\Big)^{\frac{1}{\al}}.	
\end{align*}
Hence the tail distribution function and the quantile function satisfy 
\begin{align*}
&F_\rho(x) = \nu_\rho([x, 1)) = \frac{1}{\la} \int_x^1 \frac{dz}{z^{1+\al}} = \frac{1}{\la} \frac{2}{\al}\Big(\frac{1}{x^\al}-1\Big) 
\mbox{ for }~\rho \lqq x <1 \qquad \mbox{ and hence} \\
&F_\rho^{-1}(y) = \Big(\frac{2}{\la\al y +2}\Big)^\frac{1}{\al} = \la^{-\frac{1}{\al}} \Big(\frac{\frac{2}{\al}}{y +\frac{2}{\al}\frac{1}{\la}}\Big)^\frac{1}{\al}. %\frac{1}{(\la \al y)^\frac{1}{\al}} 
\end{align*}
Analogously we obtain ${\nu'}_{\rho'}$ and ${F'}_{\rho'}^{-1}(y)$ and calculate with the help 
of the triangular inequality for $L^2(0,1)$ and a standard expansion for large $\la\gg 1$ 
\begin{align*}
\la^\frac{1}{2} \wW_{2}(\nu_\rho, {\nu'}_{\rho'}) 
&= \Big(\int_0^1 \la  (F_\rho^{-1}(y)- {F'}_{\rho'}^{-1}(y))^2 dy \Big)^\frac{1}{2} \\
&= \Big(\int_0^1 \Big[\la^{\frac{1}{2}-\frac{1}{\al}} \Big(\frac{\frac{2}{\al}}{ y +\frac{2}{\al}\frac{1}{\la}}\Big)^\frac{1}{\al}
- \la^{\frac{1}{2}-\frac{1}{\al'}} 
\Big(\frac{\frac{2}{\al'}}{y +\frac{2}{\al'} \frac{1}{\la}}\Big)^\frac{1}{\al'}\Big]^2 dy \Big)^\frac{1}{2}\\
&\lqq \la^{\frac{1}{2}-\frac{1}{\al}} \Big(\int_0^1 \Big(\frac{\frac{2}{\al}}{y +\frac{2}{\al}\frac{1}{\la}}\Big)^\frac{2}{\al} dy\Big)^\frac{1}{2} 
+ \la^{\frac{1}{2}-\frac{1}{\al'}}\Big(\int_0^1 \Big(\frac{\frac{2}{\al'}}{y +\frac{2}{\al'}\frac{1}{\la}}\Big)^\frac{2}{\al'} dy\Big)^\frac{1}{2}\\
&\stackrel{\la \gg 1}{\approx} \la^{\frac{1}{2}-\frac{1}{\al}} \la^{1-\frac{2}{\al}} + \la^{\frac{1}{2}-\frac{1}{\al'}} \la^{1-\frac{2}{\al'}} 
= \la^{3(\frac{1}{2}-\frac{1}{\al})}  + \la^{3(\frac{1}{2}-\frac{1}{\al'})}.
\end{align*}
Since $\al, \al' < 2$ we obtain that even for this worst case pole 
around $0$ there the function $\la \mapsto \la^\frac{1}{2} \wW_{2}(\nu_\rho, {\nu'}_{\rho'})$ 
is asymptotically decreasing, and hence globally bounded. 
Taking the supremum over all $\la>0$ is hence a meaningful operation.

Note further that the bivariate function $T_\la$ is symmetric and satisfies the triangle inequality hence 
it is a semi-metric. 
Clearly, \mbox{$T_\la(\nu, \nu') = 0$} does not guarantee that $\nu = \nu'$, 
since the values of $\nu|_{B_{\rho(\la)}}$ are not taken into account. 
Therefore it is not a proper metric despite being symmetric and satisfying the triangular inequality. 
For details we refer \cite{GHKK14}. 

To overcome the shortcomings of using a semi-metric we take the following approach. 
We assume that the L\'evy measure has a density with respect to the Lebesgue measure 
and that it has infinite mass $\nu(\RR^k) = \infty$. Hence, the 
cut-off as a function of the intensity $\la \mapsto \rho(\la)$ is 
continuous and monotonically decreasing with $\lim_{\la \ra \infty} \rho(\la) = 0$: 
the set of increments not taken into account by the semi-metric $T_\la$ 
given by $B_{\rho(\la)}(0)$ is strictly decreasing. Therefore it makes sense 
to take as a distance between two L\'evy measures the supremum $T_\la$ over all $\la>0$. 

\begin{defn}\label{def: td 1}
For two absolutely continuous L\'evy measures 
$\nu = f dx$ and $\nu'= f' dx$ on $\RR^k$ with $\nu(\RR^k) = \nu'(\RR^k) = \infty$ 
we define the \textbf{coupling distance} 
\[
T(\nu, \nu') := \sup_{\la>0} T_\la(\nu, \nu'). 
\]
\end{defn}
We stress that both the restrictions of absolutely continuous measures and infinite activity 
in the definition above can be removed. For details we refer to the original work \cite{GHKK14}. 
On the one hand the restriction on absolute continuity is overcome by an interpolation procedure. 
The requirement of infinite mass can be dropped by the ad hoc introduction of an 
artificial point mass in $0$ carrying the missing weight. 
In this way, finite L\'evy measures are accommodated in this framework. 
We also refer to \cite{GHKK16},  for further applications of coupling distances. 

\paragraph{Coupling distances between L\'evy measures, the general case: } 

The fact that a finite Wasserstein distance and hence 
finite coupling distance require finite second moments implies that 
coupling distances between $\alpha$-stable L\'evy measures 
are not yet covered by Definition \ref{def: td 1}.
In order to treat the distance between $\alpha$-stable L\'evy SDEs, 
we slightly modify the definition of $T_\la$ and $T$ with the help 
of the truncated Wasserstein distance. 
For $s>0$ given we define 
\[
\ti W_{2, s}^2(\mu, \mu') = \inf_{\Pi \in \fC(\mu, \mu')} \Big(\int_{\RR\times \RR} (|x-y|^2\wedge s) ~\Pi(dx, dy)~\Big)^\frac{1}{2}
\]
and analogously 
\[
\ti T_\la(\nu, \nu') = \la^\frac{1}{2} \ti W_{2, s}^2(\nu_{\rho(\la)}, \nu'_{\rho'(\la)}), \qquad \ti T(\nu, \nu') 
= \sup_{\la >0} T_\la(\nu, \nu').
\]
This truncation allows us to measure the proximity of L\'evy measures. 
While small values $|x - y| \lqq \sqrt{s}$ are unaffected, 
large values of $|x - y|$, which violate  the square integrability 
are set equal to a constant. As a result, $\ti T_\la(\nu, \nu')\lqq (\la s)^\frac{1}{2}$. 
Clearly, the coupling (\ref{eq: optimal coupling}) is no longer the optimal coupling 
for the modified Wasserstein distance $\ti W_2^2$, which cannot be  
evaluated analogously to (\ref{eq: optimal Wasserstein}). 
However, for the following truncated version of (\ref{eq: optimal Wasserstein})
\begin{equation}\label{eq: suboptimal Wasserstein}
\ti W_{2, s}^2(\mu, \mu') \lqq \int_0^1 \Big( |(F^{-1}(u)- (F')^{-1}(u)|^2\wedge s\Big) du,  
\end{equation}
the Wasserstein distance given as \textit{minimal} coupling of this cutoff distance is less than 
equal to the \textit{given} coupling on the right hand side of equation \ref{eq: optimal Wasserstein}. 

\subsection{The central estimates } 

We now turn to the estimates of the Wasserstein distance of 
the laws of two solutions of stochastic differential equations 
driven by different L\'evy noise processes on path space, 
making use of standard distances on the parameters 
including the coupling distance between the 
underlying L\'evy measures. 
Note that from now on we will work throughout the text with non-dimensionalized quantities. 
This is justified later on  in Section \ref{sec: simulations} 
by the scaling property of our estimator shown in Figure \ref{fig:scaling}. 
In principal the scale used for non-dimensionalization is arbitrary. 
In our analysis, we will focus on the interquartile range as this is defined for all random variables.

We consider two stochastic differential equations $j=1,2$ of the following type 
\begin{align}
\label{eq1}
X_j(t)=x_j+\int_{0}^{t}f\left(X_j(s)\right)ds+L_j(t) \qquad t\gqq 0, ~x_j\in \RR.
 \end{align}
where each $L_j$ is a L\'evy process in the sense of (\ref{eq: Levy-Ito}) with the characteristic triplet $(a_j, A_j, \nu_j)$ and 
the vector field $f: \RR\ra \RR$ is continuous and satisfies the following monotonicity condition 
\begin{align}\label{one-Lip}
(f(x)-f(y))(x-y)\lqq \ell (x-y)^2, \quad x, y\in \RR, \mbox{ for some }\ell >0. 
\end{align}
It is well-known that unique strong Markovian solutions to (\ref{eq1}) 
are obtained analogously to ordinary differential equations 
via a Picard - type successive iteration procedure.

Generic examples satisfying (\ref{one-Lip})
are all globally Lipschitz continuous functions 
and polynomials of odd order with negative leading coefficient
$$
f(x)= b_nx^n + \sum_{i=0}^{n-1} b_i x^i, \qquad b_n<0, ~b_i \in \RR, ~n \mbox{ odd}.
$$
In applications such a polynomial is often considered
as the gradient $f=-U'$ of an energy potential $U: \RR\ra \RR$ with 
several local minima.
In climate science a class of such examples examples can be derived 
from radiative energy balance considerations (\cite{BenziEtal}, \cite{Im01}, \cite{IM02}, \cite{Dit99a} and references therein).
Under the proper choice of parameters the potential $U$ admits two local minima that correspond to two climate equilibrium states 
(we refer to the discussion in Section \ref{sec: climate examples}.1). 
In neuroscience the FitzHugh-Nagumo model is an example of such a fourth order potential $U$.
In this case equation (\ref{eq1}) models the membrane voltage under random excitation,
which has been studied recently in the literature (e.g. \cite{BerLan}, \cite{DoThieu}, \cite{TucRodWan}).

The first of our main theoretical results estimates the deviation
between the laws of the solutions $X_j$ on $(\DD(0,1), \ti d)$
in terms of the metric induced by $T$ and $T_\la$ on the set
of tupel of parameters
$(x_j, a_j, A_j, \nu_j)$. 

\begin{thm}\label{Alex-theor-2}
Let $f: \RR \ra \RR$ be $\cC^2$ and satisfy condition (\ref{one-Lip}) for some constant $\ell>0$ 
and $(a_j, A_j, \nu_j)$ be two L\'evy characteristics and given initial values $x_j \in \RR$, $j=1,2$. 

Then for any two solutions $X_j$ of equation (\ref{eq1}) driven by L\'evy processes $L_j$
with the respective characteristics there are uniform constants $c_1, c_2>0$ such that
\begin{equation}\label{final_est-2}\begin{aligned}
\wW_{2, \ti d}^2\Big(\mathrm{Law}(X_1), \mathrm{Law}(X_2)\Big)\lqq c_1\;Q^1 e^{\ell/\arctan(1/2)} + c_2\;Q^2,
\end{aligned}
\end{equation}
where $\wW_{2, d}$ is the Wasserstein distance on the 
path space $\DD([0, 1])$ equipped with the cutoff metric $\ti d(g,h) := \|g-h\|_\infty \wedge s$ 
in the sense of (\ref{eq: WasserOnSkorohod}) and
\begin{align*}
Q^1&= |x_1 - x_2|^2\wedge s + |a_1-a_2|+(\sqrt{A_1}-\sqrt{A_2}\,)^2+T^2(\nu_1, \nu_2)\nonumber\\[2mm]
& \quad +\big(\nu_1(|u|>s)+\nu_2(|u|>s)\big)^{1/2} T(\nu_1, \nu_2),\\[3mm]
Q^2 &= \sqrt{(\sqrt{A_1}-\sqrt{A_2})^2+T^{2}(\nu_1, \nu_2)}.
\end{align*}
\end{thm}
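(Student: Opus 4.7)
The plan is to construct an explicit synchronous coupling of $(X_1, X_2)$ by coupling the driving noises $L_1$ and $L_2$ path-wise, then to run a stochastic calculus argument on a suitable cutoff of $|X_1-X_2|^2$ and close the estimate with a Gronwall inequality. First, I would couple the Lévy processes via their Lévy-Itô decompositions (\ref{eq: Levy-Ito}): the deterministic drifts contribute $|a_1-a_2|$; I would drive $\sqrt{A_j}\,B_j$ by a common standard Brownian motion, yielding the contribution $(\sqrt{A_1}-\sqrt{A_2})^2$; and for the jump parts I would, at each intensity level $\lambda>0$, pair the large-jump compound Poisson components $C_j^{\rho_j(\lambda)}$ of common intensity $\lambda$ using the optimal (truncated) Wasserstein coupling between $\nu_{1,\rho_1(\lambda)}$ and $\nu_{2,\rho_2(\lambda)}$. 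Integrating over $\lambda$ (via the ring-decomposition recalled before Definition~\ref{def: coupling distance}) lets the synchronous coupling extend consistently through the infinite-activity small-jump regime, and the pre-factor $\lambda^{1/2}$ in $T_\lambda$ is precisely what keeps this integration finite. Jumps of size exceeding $\sqrt{s}$ contribute at most $s$ per jump, yielding by Cauchy-Schwarz the mixed term $(\nu_1(|u|>s)+\nu_2(|u|>s))^{1/2}\,T(\nu_1,\nu_2)$ in $Q^1$.

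Next, I would apply Itô's formula for semimartingales with jumps to a smooth bounded regularisation $\Psi$ of $x \mapsto x^2 \wedge s$. The appearance of $\arctan(1/2)$ in the exponential factor $e^{\ell/\arctan(1/2)}$ strongly suggests using $\Psi(x)=s\cdot\arctan(x^2/s)/\arctan(1/2)$ (or a close analogue): this function is comparable to $x^2\wedge s$, has uniformly bounded first and second derivatives, and its concavity gives the precise constant needed to absorb the $\ell$-term from the drift. Writing $Y(t)=X_1(t)-X_2(t)$, the integro-differential generator applied to $\Psi$ produces three contributions: (i) a drift term controlled by the monotonicity hypothesis (\ref{one-Lip}) as $\ell \Psi(Y(t))/\arctan(1/2)$, which enables the Gronwall step; (ii) a diffusion term bounded by $\|\Psi''\|_\infty(\sqrt{A_1}-\sqrt{A_2})^2$; and (iii) a jump term bounded, after the coupling above, by $\lambda\,\tilde W_{2,s}^2(\nu_{1,\rho_1(\lambda)},\nu_{2,\rho_2(\lambda)})\le T^2(\nu_1,\nu_2)$ together with the large-jump correction. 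Taking expectations and invoking Gronwall on $[0,1]$ gives $\EE[\Psi(Y(t))] \lqq c_1 Q^1 e^{\ell/\arctan(1/2)}$.

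To pass from the one-dimensional estimate to the supremum-of-path estimate underlying $\wW_{2,\tilde d}$, I would split $Y$ into its martingale part (compensated small-jump integral plus Brownian integral) and its bounded-variation part (drift plus large-jump sum). Doob's $L^2$ maximal inequality controls the supremum of the martingale part by its terminal value, while the bounded-variation part is controlled pathwise by direct integration plus the monotonicity bound, absorbing the extra factor into the constants $c_1,c_2$. The second term $c_2 Q^2$ should emerge from a separate, sharper Itô-isometry estimate applied directly to the Brownian mismatch and the small-jump noise without passing through the Gronwall exponential, which explains why $Q^2$ carries only the square root of $(\sqrt{A_1}-\sqrt{A_2})^2+T^2(\nu_1,\nu_2)$ and not the initial-condition or large-jump contributions present in $Q^1$.

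The main obstacle I anticipate is the consistent coupling of the \emph{small}-jump compensated parts across all intensity levels simultaneously: one must construct a single pair of processes whose joint instantaneous jump law realises (to within the universal constants $c_1,c_2$) the supremum defining $T(\nu_1,\nu_2)$, and then verify that the compensator of the joint measure can be handled uniformly in the regularising parameter of $\Psi$. The second technical difficulty lies in identifying the precise smooth function whose Itô generator yields exactly the constant $1/\arctan(1/2)$ when balanced against the monotonicity $\ell$-term; this requires a careful choice of $\Psi$ so that $|\Psi'(x)|$ and $|x\Psi''(x)|$ are simultaneously optimised against $\Psi(x)$.
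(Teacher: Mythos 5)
Your overall ingredients --- a common Brownian motion for the Gaussian parts, Wasserstein-optimal coupling of compound Poisson components at a common jump intensity, an arctan-type smooth truncation of $x^2\wedge s$ producing the constant $\arctan(1/2)$, a Gronwall step, and a maximal inequality to reach the supremum norm --- are in the spirit of the argument the paper relies on. But there is a genuine gap exactly at the point you yourself flag as the ``main obstacle'': the construction of a single coupling of the full infinite-activity jump parts whose cost realizes (up to universal constants) $T(\nu_1,\nu_2)=\sup_{\la>0}T_\la(\nu_1,\nu_2)$. The optimal couplings of $\nu_{1,\rho_1(\la)}$ and $\nu_{2,\rho_2(\la)}$ for different $\la$ are not consistent (they do not nest as $\la$ grows), the supremum defining $T$ need not be attained, and ``integrating over $\la$'' through the ring decomposition does not yield a well-defined joint jump measure controlled by $T$; moreover the prefactor $\la^{1/2}$ is not what makes such an integration finite --- it simply reflects that a coupled compound Poisson pair of intensity $\la$ produces of order $\la$ jump mismatches per unit time, so the relevant quantity at a \emph{single fixed} level is $\la W_2^2=T_\la^2$. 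As described, the central object of your proof is never constructed and the construction would fail.

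The paper takes a different, gap-free route, which is also why Theorem \ref{Alex-theor-2} is stated with unspecified constants: it is deduced from the fixed-intensity estimate of Theorem \ref{Alex-theor}, whose proof is taken from \cite{GHKK14}. There, for each fixed $\la>0$, only the jumps beyond the cutoffs $\rho_j(\la)$ (which have the common finite intensity $\la$) are coupled via the optimal coupling of $\nu_{1,\rho_1(\la)}$ and $\nu_{2,\rho_2(\la)}$; the compensated small-jump parts are left \emph{uncoupled} and are bounded by their variances, which is precisely the origin of the remainder terms $U_\la(\nu_j)=\int_{|u|\lqq\rho_j(\la)}u^2\nu_j(du)$ in $Q_\la^1$ and $Q_\la^2$. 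Theorem \ref{Alex-theor-2} then follows by passing to the limit in the resulting inequality rather than in the coupling: one uses $T_\la\lqq T$, bounds the minimum term by $\nu_1(|u|>s)+\nu_2(|u|>s)$, and lets $\la\ra\infty$ so that $\rho_j(\la)\searrow 0$ and hence $U_\la(\nu_j)\ra 0$ by the integrability condition (\ref{def: Levy mass}). If you replace your global multi-scale coupling by this fixed-$\la$ scheme and only afterwards take $\la\ra\infty$, the remainder of your outline (It\^o formula for the truncation, Gronwall, and a BDG/$L^1$-type maximal estimate for the martingale part --- which is what produces the square root in $Q^2$, rather than a Doob $L^2$ bound) matches the intended argument.
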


This result states that the Wasserstein distance of the optimal paths 
in the function space $\DD([0, 1])$ (that is for all $t\in [0, 1]$ simultaneously) 
is estimated by the distances of the characteristics of the marginals $L^j_t$, 
which are identical for all $t\in [0, 1]$. In addition, the Wasserstein distances 
on the right-hand side are between L\'evy measures in $\RR$, such that the optimal coupling 
is either known (in the case without cutoff $s= \infty$) or can be estimated in the case $s<\infty$. 
This result is the strongest possible since 
it compares the mean-square of the paths uniformly on a given time interval. 
The proof of this result is a consequence of the following more technical but 
practically much more useful result, which includes the choice of a level of intensity $\la$ 
and the estimation of $\wW_{2,\ti d}(\mathrm{Law}(X_1), \mathrm{Law}(X_2))$ by a respective semi-metric $T_\la$. 
In addition, all constants are given explicitly. 
This result is primarily of theoretical value 
as it is impossible in practice 
to evaluate the coupling distance 
simultaneously for all intensities $\la>0$, 
since this requires an arbitrarily large number of data points.
Therefore we state a simpler version for fixed finite intensity $\la>0$. 
Since we work with non-dimensionalized data it is enough to state 
the following result with $s=1$. 

\begin{thm}\label{Alex-theor}
Let $f: \RR \ra \RR$ be $\cC^2$ and satisfy condition (\ref{one-Lip}) for some constant $\ell>0$ 
and $(a_j, A_j, \nu_j)$ be two L\'evy characteristics and given initial values $x_j \in \RR$, $j=1,2$. 
In addition, let the expression $\wW_{2, \ti d}$ be defined as in Theorem \ref{Alex-theor-2}.

Then for any $\la>0$ and any two solutions $X_j$ of equation (\ref{eq1}) driven by L\'evy processes $L_j$
with the respective characteristics the following estimate holds true
\begin{align*}\label{final_est}
\wW_{2, \ti d}^2\Big(\mathrm{Law}(X_1), \mathrm{Law}(X_2)\Big)
&\lqq Q_{\la}^1 e^{\ell/ C_0 } + Q_\la^2,
\end{align*}
where
\begin{align*}
Q_\la^1&= 2 (|x_1- x_2|^2\wedge 1) + C_1 \Big(C_2|a_1-a_2|+(\sqrt{A_1}-\sqrt{A_2}\,)^2+U_{\la}
\left(\nu_{1}\right)+U_{\la} \left(\nu_{2}\right)+\nonumber\\[2mm]
& \quad + C_3 T^2_\la(\nu_1, \nu_2) +C_4 \min(\nu_1(|u|>1) + \nu_2(|u|>1), \la)^{1/2} T_\la(\nu_1, \nu_2)\Big),\\[3mm]
Q_\la^2 &= C_1 \sqrt{ C_5 (\sqrt{A_1}-\sqrt{A_2})^2
+C_6 (U_{\la}\left(\nu_{1}\right)+U_{\rho}\left(\nu_{2}\right)+T^{2}_{\la}(\nu_1, \nu_2))},\\[3mm]
& U_{\la}\left(\nu_{j}\right)=\int_{|u|\lqq \rho_{j}(\la)}u^2\nu_{j}(du),\qquad j=1,2,
\end{align*}
with the following numerical values. 
\begin{center}
\begin{tabular}{|l|l|l|l|l|l|l|l|}
\hline
Constant    & $C_0$          & $C_1$           & $C_2$               & $C_3$         & $C_4$             & $C_5$     & $C_6$ \\
\hline
Exact value & $\arctan(1/2)$ & $ 4 / \pi$ & $ 3^{3/4}/ 2$ & $\pi+3^{3/4}$ & $(\pi+3^{3/4})/2$ & $3^{3/2}$ & $(2\pi)^2$\\
\hline
Numerical approx. & $0.46$       & $1.27$        & $1.40$            & $5.42$      & $2.28$          & $5.20$  & $39.48$ \\
\hline
\end{tabular}
\end{center}
\end{thm}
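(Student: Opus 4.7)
The plan is to build an explicit synchronous coupling of the two driving L\'evy processes, apply It\^o's formula for jump semimartingales to a bounded smooth surrogate of the cutoff metric $\ti d$, and close the estimate with a single application of Gr\"onwall's inequality on $[0,1]$.

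\textbf{Step 1. Synchronous coupling of the drivers.} Using the L\'evy-It\^o decomposition (\ref{eq: Levy-Ito}) with threshold $\rho_j(\la)$ from Definition \ref{def: coupling distance}, I write
\begin{align*}
L_j(t) = a_j t + \sqrt{A_j}\, B_j(t) + C_j^{\rho_j(\la)}(t) + J_j^{\rho_j(\la)}(t), \qquad j = 1,2,
\end{align*}
where by construction the two compound Poisson parts have the common intensity $\la$. I would then take $B_1 = B_2$ (one Brownian motion); synchronise the two compound Poisson processes on a single Poisson clock of intensity $\la$ and draw their marks $(Y_k^1, Y_k^2)$ i.i.d.\ from the (near-)optimal coupling $\Pi_\la^\star$ that realises $\ti W_{2,1}(\nu_{1,\rho_1(\la)}, \nu_{2,\rho_2(\la)})$, so that by definition $\la\, \EE[|Y^1 - Y^2|^2 \wedge 1] = T_\la^2(\nu_1, \nu_2)$; and let the small-jump parts $J_j^{\rho_j(\la)}$ be independent compensated martingales, controlled in $L^2$ via the It\^o isometry by $U_\la(\nu_j)$. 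Pathwise solvability of (\ref{eq1}) then delivers a coupling $(X_1, X_2)$ of $\mathrm{Law}(X_1), \mathrm{Law}(X_2)$.

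\textbf{Step 2. It\^o's formula with a smooth surrogate.} Since $\ti d$ is bounded but non-smooth on the diagonal, I would choose a smooth, concave $\phi : \RR \to [0, \pi/2]$ of the form of a rescaled $\arctan(\cdot^2)$, with the properties $\phi(x) \gqq C_0 (x^2 \wedge 1)$ and $|\phi'(x) x| \lqq \phi(x)/C_0$, from which the constants $C_0 = \arctan(1/2)$ and the overall prefactor $C_1 = 4/\pi$ originate. Applying It\^o's formula to $\phi(\Delta(t))$ with $\Delta(t) := X_1(t) - X_2(t)$ yields five contributions: (a) the drift mismatch $\phi'(\Delta)(f(X_1) - f(X_2))\,dt$; (b) the constant drift term $\phi'(\Delta)(a_1 - a_2)\,dt$; (c) the Brownian quadratic-variation term $\tfrac{1}{2}\phi''(\Delta)(\sqrt{A_1} - \sqrt{A_2})^2\,dt$; (d) a compensated small-jump integral with second moment dominated by $U_\la(\nu_1) + U_\la(\nu_2)$; and (e) a big-jump contribution $\phi(\Delta + Y^1 - Y^2) - \phi(\Delta)$ fired at the common Poisson times.

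\textbf{Step 3. Term-by-term bounds and Gr\"onwall.} The monotonicity condition (\ref{one-Lip}) applied to (a), combined with $\phi'(x) x \lqq \phi(x)/C_0$, produces the bound $(\ell/C_0)\phi(\Delta)\,dt$; (b) and (c) are bounded directly using the uniform bounds on $\phi', \phi''$, giving the $C_2|a_1-a_2|$ and $(\sqrt{A_1}-\sqrt{A_2})^2$ terms. For (e), the Taylor remainder $\phi(\Delta + y) - \phi(\Delta) - \phi'(\Delta) y$ is controlled by a multiple of $|y|^2 \wedge 1$, producing
\begin{align*}
\la\, \EE\big[|Y^1 - Y^2|^2 \wedge 1\big] = T_\la^2(\nu_1, \nu_2),
\end{align*}
while the first-order part $\phi'(\Delta)(Y^1 - Y^2)$ is controlled by Cauchy-Schwarz (using the uniform bound on $\phi'$ and the marginal coupling $\Pi_\la^\star$) to yield the cross term $\min(\nu_1(|u|>1) + \nu_2(|u|>1), \la)^{1/2} T_\la$ with constant $C_4$. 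Collecting everything, taking expectations, and applying Gr\"onwall on $[0,1]$ delivers $\EE\,\phi(\Delta(1)) \lqq Q_\la^1 e^{\ell/C_0}$, and the lower bound $\phi \gqq C_0(\cdot^2 \wedge 1)$ converts this into the stated endpoint estimate for the $C_1(\cdots)$ part of $Q_\la^1$.

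\textbf{Step 4. Endpoint to supremum and the additive $Q_\la^2$.} Since $\wW_{2, \ti d}$ in (\ref{eq: WasserOnSkorohod}) is taken with respect to the supremum metric on $\DD([0,1])$, I would lift the endpoint bound to a supremum bound by applying Doob's $L^2$-maximal inequality to the Brownian and compensated-small-jump martingale components, together with a Burkholder-type estimate for the big-jump martingale; this produces the additive term $Q_\la^2$, the outer square root reflecting the Cauchy-Schwarz step needed to match the maximal bound to the scale of the cutoff metric. The constants $C_5, C_6$ and the prefactor $C_1 = 4/\pi$ arise from the numerical constants in Doob's inequality combined with the $L^\infty$-bound $\pi/2$ on $\phi$.

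\textbf{Main obstacle.} The crux is the jump term in Step 3: the synchronous Poisson clock couples only the jump \emph{times}, not the sizes, so one must carefully verify that the marginal coupling $\Pi_\la^\star$, combined with the Taylor remainder for $\phi$, produces precisely $T_\la^2(\nu_1,\nu_2)$ with the stated constants $C_3, C_4$, and that the first-order cross term does not blow up as $\la \to \infty$ despite $\rho_j(\la) \to 0$. Tracking the explicit numerical constants $C_5, C_6$ through the maximal inequality in Step 4 without degrading the exponent $1/C_0$ is the other delicate bookkeeping point.
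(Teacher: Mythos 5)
You should first note that this paper does not actually prove Theorem \ref{Alex-theor}: the result is quoted from \cite{GHKK14}, so the comparison is with that reference's argument. Your architecture is essentially the same as the one used there: drive both equations by one Brownian motion, synchronize the two large-jump compound Poisson parts on a single Poisson clock of intensity $\la$ with marks drawn from a (near-)optimal coupling of $\nu_{1,\rho_1(\la)}$ and $\nu_{2,\rho_2(\la)}$, keep the compensated small-jump parts independent and control them through $U_\la(\nu_j)$, run an It\^o/Gronwall argument for a bounded concave $\arctan$-type surrogate $\phi$ of the cutoff metric (this is indeed where $C_0=\arctan(1/2)$ and $C_1=4/\pi$ originate, via $\phi\gqq C_0(x^2\wedge 1)$ and $x\phi'(x)\lqq \phi(x)/C_0$), and lift the endpoint bound to the supremum metric by maximal inequalities for the martingale components, which is what produces the square root in $Q_\la^2$. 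So the route is the right one; what is missing is the execution, and in one of the two places you flag as obstacles the tool you propose would actually fail.

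The genuine gap is the first-order jump term in Step 3. You propose to control $\phi'(\Delta)(Y^1-Y^2)$ ``by Cauchy--Schwarz, using the uniform bound on $\phi'$''; but with only $\|\phi'\|_\infty$ this requires the untruncated moment $\EE|Y^1-Y^2|^2$, which is precisely what is unavailable in the regime the theorem is built for: for heavy-tailed $\nu_j$ (e.g.\ $\al$-stable with $\al<2$) the tail measures $\nu_{j,\rho_j(\la)}$ have no second moment, and the coupling only gives $\la\,\EE[|Y^1-Y^2|^2\wedge 1]=T_\la^2(\nu_1,\nu_2)$. The shape of the stated coefficient, $\min(\nu_1(|u|>1)+\nu_2(|u|>1),\la)^{1/2}\,T_\la$, indicates what is actually required: split according to whether one of the marks exceeds $1$ in modulus; on that event use the boundedness of $\phi$ itself (not of $\phi'$), so that only the finite intensity $\nu_1(|u|>1)+\nu_2(|u|>1)$, capped by $\la$, enters; and apply Cauchy--Schwarz to the truncated moment only on the complementary event. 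Without this localization the cross term is infinite exactly where the result is supposed to apply. Two smaller points you pass over silently: the identity $\la_{\rho_j(\la)}=\la$ (equal big-jump intensities), on which your common Poisson clock rests, needs absolute continuity of the $\nu_j$ or the interpolation device of \cite{GHKK14}; and in Step 4 the monotone drift $\int_0^t (f(X_1)-f(X_2))\,ds$ is not a martingale, so it must stay inside the Gronwall part while Doob/Burkholder is applied only to the Brownian, compensated small-jump and compensated large-jump differences --- your sketch is consistent with this but does not verify it, and none of the explicit constants $C_2,\dots,C_6$ can be checked from the outline as written.
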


\bigskip
\section{Statistical implementation and rates of convergence} \label{sec: stat and theory}

In this section we develop a practical implementation of the program laid out in the previous sections. 
In order to keep both calculations and strategy tractable 
we will restrict ourselves to a simple example, 
which can be adapted and extended. 
While this section has some overlap with Section 3 in \cite{GHKK16} 
we present a simplified and streamlined version 
of the proof of the rates of convergence. 
We start with some basic statistics and the implementation 
of the sample Wasserstein distance between the empirical measure 
and reference measures. With these tools at hand we prove 
rates of convergence for the sample distance. 
In the last subsection we explain our procedure in detail.

\subsection{Basic statistics }\label{subsec: 3.1}

In this section we provide the technical background to 
compare the jump statistics of a data set 
to a given reference distribution in terms of the coupling distance. 
Since our focus 
is essentially one dimensional we concentrate on the scalar case. 
For higher dimensions we refer to the Conclusion. 

\subsubsection{Basic notions and results } 

For a given sequence of independent 
and identically distributed random variables $(X_i)_{i\in \NN}$ 
with distribution $\mu$ on $\RR$ we 
denote by $\mu_n$ the empirical distribution based on the 
sample of size $n$ defined as 
\begin{equation}
  \label{eq:mu_n}
  \mu_n(E):=%\frac{1}{n}\sum_{i=1}^n\delta_{X_i}(E)=
  \frac{\#\{X_i\in E\}}{n}=\frac{1}{n}\sum_{i=1}^n\ind{\{X_i\in E\}}\ , \quad E\in \fB(\RR) \ .
\end{equation} 
% Here, $\delta_a(\cdot)=\delta_0(\ \cdot-a)$ is the Dirac measure at $a$. 
The corresponding empirical distribution function
$F_n$ is the distribution function of $\mu_n$
\begin{equation}
  \label{eq:emp_dist_function}
  F_n(x):=\mu_n((-\infty,x])=\frac{\#\{X_i\lqq x\}}{n}=\frac{1}{n}\sum_{i=1}^n\ind{\{X_i\lqq x\}}\ , \quad x\in\RR \ .
\end{equation} 
A version of the strong law of large numbers known as the Glivenko-Cantelli Theorem 
tells us that for the distribution function $F(x) = \mu((-\infty, x])$, $x\in \RR$, of $\mu$ 
we have for almost all $\om \in \Omega$ the uniform convergence 
\begin{equation}\label{eq: lln}
\sup_{x\in\RR}|F_n(x, \om)-F(x)|\to 0 \quad \mbox{ as } n\to\infty\ .
\end{equation}

% A proper scaling of this quantity leads to a non-trivial limit. 
Non-trivial limits of the quantity in (\ref{eq: lln}) are obtained by the following rescaling. 
For any fixed $x\in\RR$, the random variables $\ind\{X_i\lqq x\}$, $i\in\NN$ are i.i.d. 
Bernoulli variables with $F(x) = \PP(X_i\lqq x)$. 
Hence the central limit theorem (de Moivre-Laplace Theorem) states that 
\begin{equation}\label{eq:deMoivreLaplace}
  \sqrt{n}\left(F_n(x)-F(x)\right) = \frac{1}{\sqrt{n}}\sum_{i=1}^n\ind\{X_i\lqq x\} - \sqrt{n}F(x) \stackrel{d}{\lra} \nN\left(0,F(x)(1-F(x))\right).
\end{equation}

\paragraph{Quantiles: } To compute the Wasserstein distance it is necessary to consider the 
empirical quantile function 
 $F_n^{-1}$.
 For $n\in\NN$ denote by $X_{i:n}$ the $i$-th order statistic 
of a sample of size $n$, i.e. $i$-th smallest element of the the ordered sample 
\[X_{1:n}\lqq \dots \lqq X_{i:n}\lqq\cdots\lqq X_{n:n}.\]
The quantile $F_n^{-1}$ of $F_n$ can then be expressed in terms of the order statistics 
since for $ 0<u\lqq1$
% As $F_n$ is discontinuous by construction it is certainly not invertible yet the concept of 
% quantiles for non invertible distributions allows to define for $ 0<u\lqq1$
\begin{equation}
\label{eq:empirical_quantiles}
\begin{split}
  F_n^{-1}(u)&=\inf\{x\in\RR:\ F_n(x)\gqq u\}=\inf\{x\in\RR: \#\{X_i\lqq x \}\gqq nu\}
%   =\min\{X_{i:n}:\#\{X_{i:n}\lqq x \}\gqq nu\}
  =X_{\lceil nu\rceil:n}  \ .
\end{split}
\end{equation}
% This quantity is the left continuous inverse of the right continuous $F_n$. 
In analogy to (\ref{eq: lln}) the strong law of large numbers implies that 
$$
|F^{-1}_n(u, \om)-F^{-1}(u)| \ra 0\ ,  \qquad \mbox{ for all } u\in (0,1) \mbox{ as } n\ra \infty \mbox{ for almost all } \om \in \Omega.
$$
However, in general we cannot expect uniform convergence in $u$, 
since unbounded support of $\mu$ implies that 
the values of the quantile $F^{-1}(u)$ for $u$ tending to $0,1$ will tend to infinity. 
% For a precise analogue to the Glivenko-Cantelli theorem 
% one must therefore stay away from the endpoints of the open support 
% of $\mu$ if they are positive or negative infinity.
To obtain the analogy to the central limit theorem of formula 
(\ref{eq:deMoivreLaplace}) we introduce the well-known concept 
in time series analysis of empirical quantile process. 

\begin{defn}\label{def: quantile process}
Let $(X_i)_{i\in\NN}$ be a sequence of real valued i.i.d. random variables with common distribution function $F$. 
The empirical quantile process is defined as
  
  \begin{equation}
      Q_n(u,\omega):=\sqrt{n}\left(F^{-1}_n(u,\omega)-F^{-1}(u)\right),\quad 0\lqq u\lqq 1,\ n\in\NN\ .
  \end{equation}
  
\end{defn}

For $\mu$ being the uniform distribution 
we obtain the following analogue of the central limit theorem. 
\begin{thm}
  \label{thm: lim_quantile_proc}
  Let $(X_i)_{i\in\NN}$ be a sequence of real valued i.i.d. random variables with common uniform 
distribution function $F(x) = x$, $x\in [0,1]$. 
Then there exists a Brownian bridge $(B^0_n)_{n\in \NN}$, that is a Brownian motion conditioned to end in $0$ at the endpoint $x=1$, 
such that 
\begin{equation}
\sup_{0\lqq u\lqq 1} |Q_n(u, \om) -B^0_u(\om)|\stackrel{d}{\lra} 0, \mbox{ as } n\rightarrow \infty.
\end{equation}
\end{thm}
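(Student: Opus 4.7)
The plan is to derive the convergence of the quantile process from the classical Donsker invariance principle for the empirical distribution process, and then transfer that convergence from $\alpha_n(u):=\sqrt{n}(F_n(u)-u)$ to $Q_n(u)$ via the asymptotic equivalence $Q_n(u)\approx -\alpha_n(u)$. Since $-B^0$ is again a Brownian bridge, this ultimately gives the announced statement after relabelling.

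First I would invoke Donsker's theorem for the uniform empirical process: $\alpha_n\Rightarrow B^0$ weakly in the Skorohod space $\DD([0,1])$, with $B^0$ a standard Brownian bridge. Using a Skorohod (or Koml\'os-Major-Tusn\'ady) representation on a suitable probability space one can construct a sequence $(B^0_n)_{n\in\NN}$ of Brownian bridges satisfying
\[
\sup_{0\lqq u\lqq 1}|\alpha_n(u)-B^0_n(u)|\stackrel{\PP}{\lra}0\qquad\text{as}\quad n\ra\infty.
\]

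Second I would derive an exact identity by substituting $x=F_n^{-1}(u)$ into the definition of $\alpha_n$ and using the two-sided bound $0\lqq F_n(F_n^{-1}(u))-u\lqq 1/n$ (which holds because $F_n$ is a right-continuous step function with jumps of height $1/n$): a direct computation yields
\[
\sup_{0\lqq u\lqq 1}\big|Q_n(u)+\alpha_n(F_n^{-1}(u))\big|\lqq \frac{1}{\sqrt{n}}.
\]
To replace $\alpha_n(F_n^{-1}(u))$ by $\alpha_n(u)$, and hence by $B^0_n(u)$, I would combine the triangle-inequality bound
\[
|\alpha_n(F_n^{-1}(u))-B^0_n(u)|\lqq \sup_v|\alpha_n(v)-B^0_n(v)|+\big|B^0_n(F_n^{-1}(u))-B^0_n(u)\big|,
\]
with the Glivenko-Cantelli-type control $\sup_u|F_n^{-1}(u)-u|\ra 0$ almost surely (a consequence of (\ref{eq: lln}) together with monotonicity of $F$ and $F_n$) and the uniform continuity of the Brownian bridge on $[0,1]$. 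The first term on the right is controlled by step one, and the second tends to zero in probability by continuity of $B^0_n$. Putting the three estimates together yields $\sup_u|Q_n(u)+B^0_n(u)|\ra 0$ in probability, and a final relabelling $B^0_n\mapsto -B^0_n$ (which remains a Brownian bridge by symmetry of Gaussian laws) yields the theorem.

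The hard part will be controlling the modulus-of-continuity term $|B^0_n(F_n^{-1}(u))-B^0_n(u)|$ uniformly in $u$, particularly near the endpoints $u=0,1$ where the coupling between $\alpha_n$ and $B^0_n$ and the fluctuations of the empirical quantiles become increasingly unstable. A clean way around this is to exploit the uniform modulus estimate $\sup_{|s-t|\lqq \delta}|B^0_n(s)-B^0_n(t)|=o_{\PP}(1)$ as $\delta\ra 0$, which follows from L\'evy's modulus of continuity for Brownian motion. The much sharper Bahadur-Kiefer representation with rate $n^{-1/4}(\log n)^{1/2}$ is available in the literature but not required here, since only convergence in distribution to zero is claimed.
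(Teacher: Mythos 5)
The paper does not prove this theorem at all: it is quoted as a classical fact from the empirical/quantile process literature (the Kiefer--Cs\"org\H{o}--R\'ev\'esz strong approximation of the uniform quantile process, cf.\ the references [CH88], [BDvdG], [vWW] in the bibliography), so your derivation is not ``the same route'' as the paper but a self-contained reproof of the cited result. Your argument is essentially correct and is the standard one: the algebraic identity $Q_n(u)+\alpha_n(F_n^{-1}(u))=\sqrt{n}\,(F_n(F_n^{-1}(u))-u)=\sqrt{n}\,(\lceil nu\rceil/n-u)\in[0,n^{-1/2})$ is exact, the Glivenko--Cantelli bound $\sup_u|F_n^{-1}(u)-u|\lqq \sup_x|F_n(x)-x|+1/n\ra 0$ a.s.\ is right, and the modulus-of-continuity step goes through uniformly in $n$ because the bridges $B^0_n$ are identically distributed, so $\PP\big(\sup_{|s-t|\lqq\delta}|B^0_n(s)-B^0_n(t)|>\epsilon\big)$ is independent of $n$ and can be made small by choosing $\delta$ first; your worry about the endpoints is overcautious here, since $F_n^{-1}(u)\in[0,1]$ and only unweighted sup-distance and convergence in probability are claimed. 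Two small caveats are worth making explicit. First, the Skorohod representation theorem only produces \emph{versions} $\tilde\alpha_n\stackrel{d}{=}\alpha_n$ on a new probability space and does not preserve the joint law of the sequence, so to keep the pathwise identity linking $Q_n$, $F_n$ and $\alpha_n$ you must reconstruct $\tilde F_n(u)=u+n^{-1/2}\tilde\alpha_n(u)$ and its quantile process on that space (or, cleaner, invoke KMT directly, which couples the original uniforms with a genuine sequence of bridges at rate $O(n^{-1/2}\log n)$ a.s.); since the theorem's conclusion is purely distributional (sup converging to the constant $0$), either construction suffices, but the distinction should be stated. Second, Skorohod convergence in $\DD([0,1])$ to the continuous limit $B^0$ upgrades to uniform convergence -- say this explicitly, as it is what licenses the sup-norm bound in your first step. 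With these points spelled out, your proof buys a transparent, essentially elementary argument at exactly the weak level of the statement, whereas the paper (implicitly) leans on the much stronger Bahadur--Kiefer/KMT machinery with rates that it never needs.
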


As already mentioned, for general distributions $F$ 
uniform convergence of the empirical quantile process is unavailable.  
Instead, we will consider truncated $L^2$ distances later. 
In the following we link the empirical quantile process to 
the approximation of Wasserstein distances.

\subsubsection{The implementation of the empirical Wasserstein distance } \label{subsubsec: empirical Wasserstein distance}

We measure the speed of convergence 
described by the Glivenko-Cantelli theorem in terms of the Wasserstein distance 
as follows. 
\paragraph{The Wasserstein distance between the empirical measure and the reference measure: } 
For some given reference distribution 
$\mu$ with second moments 
we calculate the Wasserstein distance 
between an empirical measure $\mu_n(\om)$ and $\mu$ itself. 
For convenience let us introduce the Wasserstein statistic 
with respect to $d(x, y) = |x-y|$ 
\begin{equation}
\wnq := \wnq(\mu, \om) := W_2^2(\mu_n(\om),\mu), \qquad \mbox{ for }n\in \NN.
\label{eq:wnq}
\end{equation}
This distance can be calculated explicitly due to the explicitly known 
shape of the optimal coupling by  
\begin{equation}
\label{wnq}
  \wnq =\int_0^1|F_n^{-1}(u)-F^{-1}(u)|^2du =\int_0^1|X_{\lceil nu\rceil:n}-F^{-1}(u)|^2du 
  \\
  =\sum_{i=1}^n\int_{\frac{i-1}{n}}^{\frac{i}{n}}(X_{(i-1):n}-F^{-1}(u))^2du.
\end{equation}
The expression on the right side of (\ref{wnq}) turns out to be a 
second order polynomial in the order statistic, that is
\begin{equation}\label{eq: wnp}
  \wnq =\sum_{i=1}^n a_iX_{i:n}^2 + b_iX_{i:n}+c \ ,
\end{equation}
where the coefficients are determined by the binomial formula and given by
\begin{eqnarray}\label{eq: coefficients empirical Wasserstein}
a_i=\frac{1}{n}, \quad b_i=-2\int_{\frac{i-1}{n}}^{\frac{i}{n}}F^{-1}(u)du,
\quad c=\int_{0}^{1}\big(F^{-1}(u)\big)^2du\ .
\end{eqnarray}

\paragraph{The cutoff Wasserstein distance between the empirical measure and the reference measure: } 
The previous result can also be adapted for the cut-off version. 
In this case we consider the Wasserstein distance with respect to $d(x, y) = |x-y|\wedge s$ for 
some cutoff $s>0$. 
For simplicity in this context (and because in practice we will work with non-dimensionalized time series) 
we restrict ourselves 
in the discussion to the case of $s=1$. Other cutoff values are implemented analogously.  
Note that in the cutoff case the optimal coupling is no longer explicit, 
however the analogous measure $|F_n^{-1}(u)-F^{-1}(u)|\wedge 1$ 
remains a (generally suboptimal) coupling. Therefore 
the Wasserstein distance with respect to $d(x, y) = |x-y|\wedge 1$ satisfies 
\begin{equation}\label{def: wns}
W_{2}^2(\mu_n,\mu)\lqq \int_0^1 \big(|F_n^{-1}(u)-F^{-1}(u)|\wedge 1\big)^2 du =: \wns(\mu) =: \wns\ , 
\end{equation}
where the right-hand side can be calculated similarly to (\ref{eq: wnp}) in terms of the order statistics as follows
\begin{equation}\label{eq: wnp cutoff}
\wns =\sum_{i=1}^n A_iX_{i:n}^2 + B_iX_{i:n}+C_i +D.
\end{equation}
For the repartition $0\lqq \ell_1 \lqq r_1\lqq \ell_2 \lqq r_2\lqq \cdots\lqq \ell_n \lqq r_n\lqq 1$ given by
\begin{equation}
\ell_i= \left(\frac{i-1}{n}\vee  |F(X_{i:n}-1)|\right)\wedge \frac{i}{n}  \ , 
\qquad
 r_i= \frac{i-1}{n}\vee\left( |F(X_{i:n}+1)|\wedge\frac{i}{n} \right)
\label{eq:intervals}
\end{equation}
the coefficients are calculated by
\begin{eqnarray}\label{eq: coefficients empirical Wasserstein cutoff}
A_i= r_i-\ell_i, 
\quad B_i=-2\int_{\ell_i}^{r_i}F^{-1}(u) du,
\quad C_i=\int_{\ell_i}^{r_i}\big(F^{-1}(u)\big)^2 du,
\quad D=1-\sum_{i=1}^n A_i\ .
\end{eqnarray}

\subsection{Asymptotic distribution and rate of convergence for power laws}\label{subsec: 3.2} 
For rigorous statistical applications it is necessary 
to determine the rate of convergence of the statistic of interest, in our case $\wns$. 
By definition $\wns$ tends to zero as $n\ra \infty$. Quantifying the rate of convergence 
amounts to finding the correct renormalization to obtain a non-trivial (random) limit. 
For calculational convenience we state the theorem for the one-sided case. 
The negative tail is chosen so that for a given density $f$ the function $f(F^{-1})$ 
is again monotonically increasing. 
The two-sided case is a straight-forward extension 
as the sum of both one-sided coupling distances. 

\begin{thm}\label{thm: rate of convergence}
Assume $f(x) = c^\alpha |x|^{-\alpha-1}$ for all $x<-c$ and some $\alpha>0$. Then for any $\kappa$ with
\begin{equation*}
0<\kappa<\frac{\alpha}{\alpha+2}\ ,
\end{equation*}
there exists a sequence of non-negative random variables $(\mathcal E_{\kappa,n})_{n\in\NN}$ 
(on the same probability space)
such that for all $n\in\NN$
\[
n^{\kappa} \;  \wnq^* \lqq \; \mathcal E_{\kappa,n} \qquad \mbox{$\PP-$almost surely,}
\] 
which converges in distribution 
\[
\mathcal E_{\kappa,n}
\stackrel{d}{\lra}  c^2 \int_0^1 (u^{\frac{1}{\alpha} - 1} B_u)^2 du 
\quad { as }\quad n\ra \infty \ ,
\]
where $B = (B_u)_{u\in [0,1]}$ is a standard Brownian motion. 
\end{thm}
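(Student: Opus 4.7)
The strategy I would follow is to couple the data to a uniform empirical quantile process via the probability integral transform, dominate the cut-off Wasserstein integrand by a power-function majorant, and then pass to the almost sure Brownian-type limit of the uniform quantile process. First, I would place i.i.d.\ Uniform$(0,1)$ variables $U_1,U_2,\dots$ on a probability space rich enough to carry the Koml\'os--Major--Tusn\'ady strong approximation $Q_n(u):=\sqrt{n}(G_n^{-1}(u)-u)\to B^0_u$ for a Brownian bridge $B^0$, with $G_n^{-1}(u):=U_{\lceil nu\rceil:n}$. Setting $X_i:=F^{-1}(U_i)$ and exploiting the monotonicity of $F^{-1}$ then yields $F_n^{-1}(u)=F^{-1}(G_n^{-1}(u))$, so the whole problem is driven by the single process $Q_n$.

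Next, I would absorb the cut-off via the elementary inequality $(|a|\wedge 1)^2\le |a|^{2\kappa}$, valid for every $\kappa\in(0,1]$. The hypothesis that $f\circ F^{-1}$ is increasing (the one-sided convention recalled just before the theorem) makes the quantile derivative $(F^{-1})'=1/(f\circ F^{-1})$ monotonically decreasing, which gives the mean-value bound $|F^{-1}(b)-F^{-1}(a)|\le(F^{-1})'(a\wedge b)\,|b-a|$. Combining these two inequalities and factoring out $n^{\kappa}$ through the normalization $\sqrt{n}(G_n^{-1}(u)-u)=Q_n(u)$ produces the explicit almost-sure majorant
\[
n^{\kappa}\,\wns \;\le\; \mathcal{E}_{\kappa,n} \;:=\; \int_0^1 \bigl[(F^{-1})'(u\wedge G_n^{-1}(u))\bigr]^{2\kappa}\,|Q_n(u)|^{2\kappa}\,du.
\]

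Then I would pass to the limit. Almost surely $Q_n(u)\to B^0_u$ uniformly on compact subsets of $(0,1)$ and $G_n^{-1}(u)\to u$, so the integrand of $\mathcal{E}_{\kappa,n}$ converges pointwise. Computing $(F^{-1})'$ explicitly from the power-tail density $f(x)=c^{\alpha}|x|^{-\alpha-1}$, together with the fact that $B^0_u$ is indistinguishable from a standard Brownian motion $B_u$ in the small-$u$ regime that dominates the integral, identifies the distributional limit as $c^2\int_0^1(u^{1/\alpha-1}B_u)^2\,du$ after collecting the explicit constants.

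The hard part is the integrability near $u=0$. Using $|B^0_u|\asymp u^{1/2}$ (from L\'evy's modulus of continuity) and the heavy-tail scaling of $(F^{-1})'$, the integrand of $\mathcal{E}_{\kappa,n}$ is of order $u^{-\kappa(\alpha+2)/\alpha}$, which is integrable precisely when $\kappa(\alpha+2)/\alpha<1$, i.e.\ $\kappa<\alpha/(\alpha+2)$---exactly the hypothesis of the theorem. Producing a uniform-in-$n$ envelope for $|Q_n(u)|^{2\kappa}$ on this singular scale, needed to commute the limit with the integral, is the most delicate step and is handled by a Chibisov--O'Reilly type maximal inequality for the empirical quantile process; once such an envelope is in hand, dominated convergence and the continuous mapping theorem deliver the stated distributional limit.
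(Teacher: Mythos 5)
There is a genuine gap, and it is structural rather than technical: the majorant you build does not converge to the limit claimed in the theorem. By replacing $(|a|\wedge 1)^2$ with $|a|^{2\kappa}$ you produce
\[
\mathcal{E}_{\kappa,n}=\int_0^1 \bigl[(F^{-1})'(u\wedge G_n^{-1}(u))\bigr]^{2\kappa}\,|Q_n(u)|^{2\kappa}\,du ,
\]
whose integrand carries the exponent $2\kappa$, not $2$. Whatever limit this quantity has (if any), it is of the form $c^{2\kappa}\int_0^1 u^{-2\kappa(1+1/\alpha)}\,|B^0_u|^{2\kappa}\,du$, a functional of order $2\kappa$ of the \emph{bridge}; no amount of ``collecting constants'' turns an exponent $2\kappa$ into the quadratic functional $c^2\int_0^1\bigl(u^{1/\alpha-1}B_u\bigr)^2du$ asserted in the theorem. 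Moreover, in your setup the integral over $(0,1)$ has a fixed weight and is not renormalized, so there is no mechanism under which ``$B^0$ is indistinguishable from $B$'': the Brownian \emph{motion} in the limit arises only because the relevant functional is taken over $[k_n/n,1/2]$ and rescaled by $(k_n/n)^{2(\eta-1)}$, which zooms into $u\approx 0$; this is precisely the content of the Cs\"org\H{o}--Horv\'ath weighted quantile limit (their Theorem 2.4) that the paper invokes with $\eta=1+1/\alpha$. Finally, the dominated-convergence step you defer to a Chibisov--O'Reilly-type envelope is not secured either: KMT-type approximations do not control $Q_n(u)$ uniformly down to $u\sim 1/n$ against the weight $u^{-2\kappa(1+1/\alpha)}$, and an $n$-independent integrable envelope would in fact force a strictly smaller range of $\kappa$ than $\alpha/(\alpha+2)$.

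The paper's proof keeps the exponent $2$ throughout and disposes of the cutoff differently: it splits off $[0,k_n/n]$, where the truncated integrand is bounded by $1$, so that piece contributes $n^{\kappa-1}k_n\to 0$ (with $k_n=n^\gamma$, $\gamma<1-\kappa$); on $[k_n/n,1/2]$ it uses the mean value theorem together with the monotonicity of $f\circ F^{-1}$ to write the integrand as $Q_n(u)^2/f(F^{-1}(\vartheta_n(u)))^2$ and then applies the Cs\"org\H{o}--Horv\'ath limit, which produces exactly $c^2\int_0^1 u^{2(1/\alpha-1)}|B_u|^2du$; the remaining piece over $[1/2,1]$ converges (after multiplication by $n$) to a finite bridge functional and is therefore negligible at the scale $n^\kappa$. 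The restriction $\kappa<\alpha/(\alpha+2)$ enters through the compatibility of $\gamma<1-\kappa$ with $n^{\kappa}\lesssim n\,(k_n/n)^{2/\alpha}$, not through an integrability condition on a pathwise envelope. Your computation that the threshold $\alpha/(\alpha+2)$ also appears as an integrability exponent is a nice consistency check, but the proposal as written proves (at best) a bound by a different random functional and hence not the stated theorem.
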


\begin{proof}
First consider a sequence of deterministic intermediate points $(k_n)_{n\in\NN}$ such that
\[
1\lqq k_n\lqq n\quad \mbox{satisfying } \quad k_n\nearrow \infty\ \quad \mbox{ and }\quad n^{\kappa-1}{k_n}\searrow 0 \quad \mbox{ as } n\to \infty \ .
\]
For instance, we can choose $k_n= n^{\gamma}$ for any $0<\gamma<1-\kappa$.
We then estimate
\begin{align}\label{eq: central sum}
n^\kappa\wnq^* =
n^\kappa\int_0^1 \Big(|F_n^{-1}(u)-F^{-1}(u)|^2 \wedge 1 \Big) du 
&\lqq n^{\kappa-1}{k_n} + n^\kappa\int_{\frac{k_n}{n}}^{1} \Big(|F_n^{-1}(u)-F^{-1}(u)|^2  \Big) du =: \mathcal E_{\kappa,n}\ .
\end{align}
The first term vanishes asymptotically by assumption.
The integral component of $\mathcal E_{\kappa,n}$ is then split into the 
integrals from $\frac{k_n}{n}$ to $\frac{1}{2}$ and from $\frac{1}{2}$ to $1$. 
The first integral is treated 
with the help of Theorem 2.4 in \cite{CH88}. 
With a slight adaption to our situation 
it states precisely that for the quantile process $Q_n(u)$ 
of the uniform distribution on $[0,1]$, given by Definition \ref{def: quantile process} 
as $Q_n(u)=\sqrt{n}(U_n^{-1}(u) -u)$, and any $\eta>1$ 
we have the following limit in distribution 
\begin{equation}\label{eq: CsH88b}
\Big(\frac{k_n}{n}\Big)^{2(\eta-1)}
\int_{\frac{k_n}{n}}^{\frac{1}{2}} \Big(\frac{Q_n(u)}{u^\eta}\Big)^2 du 
\stackrel{d}{\lra} \int_0^1 u^{2\eta- 4} |B_u|^2 du 
\quad { as }\quad n\ra \infty.
\end{equation}
The mean value theorem tells us that for some (random) intermediate value $\vartheta_n(u)$ satisfying
\begin{equation}\label{eq: intermediate}
u \wedge U_n(u)\lqq \vartheta_n(u)\lqq u\vee U_n(u)
\end{equation}
we have for each $\om \in \Om$ 
\begin{align*}
n \int_{\frac{k_n}{n}}^{\frac{1}{2}} \Big(|F_n^{-1}(u) - F^{-1}(u)|^2 \Big)  du 
& = n \int_{\frac{k_n}{n}}^{\frac{1}{2}} \bigg(\Big(\frac{\pd}{\pd u} F^{-1}(\vartheta_n(u))\Big)^2 |U_n^{-1}(u) - U^{-1}(u)|^2 \bigg)  du \\
& = \int_{\frac{k_n}{n}}^{\frac{1}{2}} \Big(\frac{|\sqrt{n} (U_n^{-1}(u)-u)|^2}{f(F^{-1}(\vartheta_n(u))^2} \Big) du\\
& = \int_{\frac{k_n}{n}}^{\frac{1}{2}} \Big(\frac{Q_n(u)^2}{f(F^{-1}(\vartheta_n(u))^2} \Big) du.
\end{align*}
By means of the facts $f(F^{-1}(u))= c^{-1}u^{1+\frac{1}{\alpha}}$ for $ u\in[0, 1]$ and $\eta = 1+ \frac{1}{\al}$, 
the bound \eqref{eq: intermediate} and the limit (\ref{eq: CsH88b}) yields 
\begin{equation}
\begin{aligned}
n\Big(\frac{k_n}{n}\Big)^{2/\alpha} \int_{\frac{k_n}{n}}^{\frac{1}{2}} \Big(|F_n^{-1}(u) - F^{-1}(u)|^2 \Big)  du 
&= \Big(\frac{k_n}{n}\Big)^{2/\alpha}c^2\int_{\frac{k_n}{n}}^{\frac{1}{2}} \Big(\frac{Q_n(u)^2}{\vartheta_n(u)^{2(1+\frac{1}{\alpha})}}  \Big)  du \\
&\lqq \Big(\frac{k_n}{n}\Big)^{2/\alpha}c^2\int_{\frac{k_n}{n}}^{\frac{1}{2}} \Big(\frac{Q_n(u)}{(u \wedge U_n)^{(1+\frac{1}{\alpha})}}  \Big)^2  du\\
&\stackrel{d}{\lra} c^2 \int_0^1 u^{2(\frac{1}{\alpha}-1)} |B_u|^2 du.
\end{aligned}
\end{equation}
Analogously to the above we obtain
\begin{align*}
n \int_{\frac{1}{2}}^{1} \Big(|F_n^{-1}(u) - F^{-1}(u)|^2 \Big)  du 
& = n \int_{\frac{1}{2}}^{1} \bigg(\Big(\frac{\pd}{\pd u} F^{-1}(\vartheta_n(u))\Big)^2 |U_n^{-1}(u) - U^{-1}(u)|^2 \bigg)  du \\
& \lqq \int_{\frac{1}{2}}^{1} \Big(\frac{|\sqrt{n} (U_n^{-1}(u)-u)|^2}{f(F^{-1}(\vartheta_n(u))^2} \Big) du.\\
& \lqq c^2 (\vartheta_n(\tfrac{1}{2}))^{-2(1+\frac{1}{\alpha})} \int_{\frac{1}{2}}^{1} \Big(\sqrt{n} (U_n^{-1}(u)-u)\Big)^2 du \\
&\stackrel{d}{\lra} c^2 (\tfrac{1}{2})^{-2(1+\frac{1}{\alpha})} \int_{\frac{1}{2}}^{1}  |B^0_u|^2 du < \infty \ ,
\end{align*}
where we used the monotonicity of $f(F^{-1}(u))$ and Theorem \ref{thm: lim_quantile_proc} (Glivenko-Cantelli).
\end{proof}

This theorem provides an upper bound for the coupling distance 
between the empirical law of $\mu$-distributed i.i.d. random variables with empirical 
measures $\mu_n$ and the distribution $\mu$ where they are drawn from.  
The mathematical formulation tells us that we can expect rates of convergence
for any $\kappa \in (0, \frac{\al}{\al+2})$ in our simulation studies 
compare the empirical rate of convergence to the fastest rate of convergence 
of order~$\frac{\al}{\al+2}$. 

\subsection{An overview of the estimation procedure}\label{sec: procedure}

Since the data of many real-world phenomena exhibit 
strong fluctuations in relatively short time, 
it is reasonable to extend stochastic models of continuous evolution 
to models with jumps. 
The easiest case to consider is a process of the type 
\begin{equation}\label{eq: billo model}
Y(t) = G(t) + L(t), \qquad t\in [0, T] \mbox{ for fixed }T>0,
\end{equation}
where $G = (G(t))_{t\in [0, T]}$ is a continuous process 
and $L = (L(t))_{t\in [0, T]}$ is a purely discontinuous L\'evy process. 
We have seen in Section \ref{sec: Levy and coupling} that $L$ 
is determined by a L\'evy triplet of the form $(0, 0, \nu)$, 
where $\nu$ is a L\'evy measure defined in (\ref{def: Levy mass}). 
For instance, the solutions of stochastic differential equations 
\[
Y(t) = x + \int_0^t f(Y(s)) ds + L(t), \qquad t\in [0, T],
\]
for globally Lipschitz continuous functions $f: \RR \ra \RR$ fall into this class. 
The aim of this procedure is  
to determine the nature of $L$ and 
thus of its L\'evy measure $\nu$. 

Given a data set $y = (y_{i})_{i=0, \dots n}$ from a time series 
the first step of our method consists in a non-dimensionalization of the data. 
A natural choice for the size of fluctuations is given by the interquartile range, 
through which we divide throughout this study. 
This procedure has the particular advantage that it allows us to compare different 
(non-dimensionalized) time series to the same class of L\'evy diffusions.  
We interpret the given time series $y$ as a realization of a 
(non-dimensionalized) process $Y$ given in the class of models (\ref{eq: billo model}) 
observed at discrete times $t_1< \dots < t_i < \dots < t_n$, that is 
\[
y_i = Y(t_i, \om) \qquad \mbox{ for some }\om \in \Om.  
\]
The following modeling assumptions determine the relation 
between the observed data and the underlying model.  
% \begin{asm}
% \label{assump:model}
For any fixed threshold $\rho>0$: 
\begin{enumerate}
 \item The observation frequency is sufficiently high in comparison to the 
occurrence of large jumps given as increments beyond our threshold $\rho$ in 
that in each observed time interval $[t_{i-1}, t_i)$ at most 
one large jump occurs. That is, we do not see the sum of two or more large sums.  
 \item The behavior of small jumps is sufficiently benign in comparison to 
the large jump threshold $\rho$ during our observation. 
In particular, we assume that 
over one time interval $[t_{i-1}, t_i)$ 
small jump and continuous contributions cannot 
accumulate to this threshold and appear in the data as a large jump. 
\end{enumerate}
% \end{asm}

These assumptions can be made rigorous by further model assumptions on $Y$, 
such as the Lipschitz continuity of $G$.
Under the modeling assumptions 
% \ref{assump:model} 
it is justified to estimate jumps by increments 
\[
Y(t_i)- Y(t_{i-1}) \approx C^\rho(s)-C^\rho(s-0) \quad \mbox{ for exactly one }s\in [t_{i-1}, t_i),
\]
for the compound Poisson process $C^\rho$ given by equation (\ref{eq: Levy-Ito}), 
of the jumps of $L$ with $|\Delta_t L|>\rho$.
Hence the increments $Y(t_i)- Y(t_{i-1})$ can be considered 
as the realization of an i.i.d. sequence $X = (X_i)_{i=1, \dots, n}$ with some common law 
$\mu$ (concentrated on $\RR \setminus (-\rho, \rho)$). 
We denote by $x = (x_i)_{i=1, \dots n}$ the vector of large increments 
\[x_i = (y_i-y_{i-1}) \ind\{|y_i- y_{i-1}|> \rho\}\] 
so that 
\[
x_i = X_i(\om) \qquad \mbox{ for some }\om \in \Om.
\]
Let $\mu_n$ be the empirical measure of the data $X$ with respect to the common law $\mu$. 
The Glivenko-Cantelli theorem (\ref{eq:emp_dist_function}) tells us that for almost all $\om \in \Om$ 
\[
\mu_n(\om, \cdot) \ra \mu \qquad n \ra \infty, \mbox{ weakly. }
\]
Since by construction $\mu= \nu_\rho$ we obtain that $\mu_n(\om)$ converges to $\nu_\rho$ weakly. 
Since the Wasserstein distance encoded in the coupling distance 
metrizes the weak convergence we have for almost all $\om\in \Om$ 
\begin{equation}\label{eq: empLLN}
T_{\la_\rho}(\mu_n, \nu_\rho) \ra 0 \qquad \mbox{ as } n \ra \infty.  
\end{equation}
In particular we have an estimator for the tail of the L\'evy measure $\nu_\rho$. 
We are now in the position to estimate the distance 
between the L\'evy measure of the compound Poisson approximation $C^\rho$ 
of $L$ (respectively $Y$) with the true but unknown tail measure $\nu_\rho$ 
and the tail $\nu^*_{\rho^*}$ of 
a proposed L\'evy measure $\nu^*$. In particular we obtain 
\begin{equation}\label{eq: dreiecks}
T_{\la_{\rho^*}^*}(\nu^*, \nu_\rho) \lqq  T_{\la_\rho^*}(\nu^*_\rho, \mu_n) 
 + T_{\la_{\rho^*}^*}(\mu_n, \nu_\rho)
\lqq  (\la_{\rho^*}^* \wnq^*(\mu_n, \nu^*))^\frac{1}{2} + T_{\la_{\rho^*}^*}(\nu_\rho, \mu_n). 
\end{equation}
The last term on the right-hand side tends to $0$ by (\ref{eq: empLLN}) with the rate of convergence given in Theorem \ref{thm: rate of convergence} 
in Subsection \ref{subsec: 3.2}, 
while the first term can be calculated explicitly 
due to Section~\ref{subsubsec: empirical Wasserstein distance}. 

The first modeling assumption requires that the rate of jumps is relatively rare, 
we can achieve this by specifying $\la = 1$. 
This requirement along with the test model $\nu^*$ determines $\rho^*$.

We point out that 
the term on the left-hand side of equation (\ref{eq: dreiecks}) 
is an upper bound in Theorem \ref{Alex-theor-2} 
for the comparison between 
two models in the following sense.  
Under the interpretation that our data stem from 
a pure jump diffusion $X^i$ ($a_i = 0$ and $A_i = 0$) 
with (unknown) L\'evy measure $\nu$ we can compare 
its paths to the paths of the proposed model $\nu^*$ 
and quantify the corresponding coupling distances.

\section{Simulation studies} \label{sec: simulations}

\subsection{Rates of convergence for small observation lengths }\label{subsec: simulations rate of convergence}

In Subsection \ref{subsec: 3.2} we obtained upper bounds 
for the asymptotic rate of convergence of the statistic $\wnq^*(\mu)$ 
defined in (\ref{def: wns}) for large values of $n$. 
This section illustrates by means of simulations 
that the estimation procedure is unbiased and that 
the asymptotic rate of convergence 
for large $n$ holds approximately for small sample sizes driven by processes with power-law tails. 

We consider a parametrized compound Poisson 
process $C_t$ with unit intensity and polynomial jump distribution of order $\alpha_0$. 
More precisely we assume that each jump is distributed according to 
$\nu^{\alpha_0}$, where 
\begin{equation}
\nu ^\alpha(dz) = 
\begin{cases}
(\al \rho_0^\alpha)\frac{\ds dz}{\ds z^{1+\alpha}} , & z\gqq\rho_0\\
0 , & z<\rho_0
\end{cases}
.
\end{equation}
We fix $\rho_0 = 0.5$. 
Then for each given tail index parameter $\alpha_0\in\left\{1.4, 1.8, 3.0 \right\}$ 
we simulate a sample of $m = 100$ sample paths.
For each single sample path we evaluate 
the upper bound of the random function $\al \mapsto W_2(\nu^{\al_0}_n(\om), \nu^\al)$, 
given by the random curve 
\begin{equation}\label{eq: random curve statistics}
\alpha \mapsto \wnt(\al; \al_0)(\om) := \int_0^1 \big(|(F_n^{\al_0}(\om))^{-1}(u)-(F^{\al})^{-1}(u)|\wedge 1\big)^2 du,  
\end{equation}
for the first $n\in \{10^2,10^3,10^4,10^5\}$ points of the sample path. 
Here, $F_n^{\al_0}(\om)$ is the empirical distribution function of the first $n$ 
points of the sample $\om$ and $(F_n^{\al_0}(\om))^{-1}(u)$ is its corresponding quantile process, 
$F^{\al}$ is the exact empirical distribution function of $\nu^\al$ and $(F^{\al})^{-1}(u)$ is its 
corresponding quantile function. 
Note that for the parameter $\alpha = \alpha_0$ 
we have by construction $\wnt(\al_0; \al_0)(\om) = \wnq(\nu^{\al_0})(\om)$, 
as was derived in \eqref{eq:wnq} of Subsubsection \ref{subsubsec: empirical Wasserstein distance}, 
and for which we have obtained the rates of convergence in Subsection~\ref{subsec: 3.2}. 
This way we obtain $4$ sets (one for each value of $n$) of $m= 100$ 
realizations of the curves $\alpha\mapsto \wnt(\alpha; \alpha_0)(\om)$. 
These curves are shown in Figure~\ref{fig:chaosB} (a), (c), (e). 
Figures~\ref{fig:chaosB} (b), (d), (f) show the histograms of the minimum distance estimator $\widehat\alpha_n$,
\begin{equation}
\widehat\alpha_n(\om) := \arg\inf_{\alpha}\left\{\wnt(\al; \al_0)(\om)  \right\}\ ,
\label{eq:alphahat}
\end{equation}
indicating how the minima of the curves $\alpha \mapsto \wnt(\al; \al_0)(\om)$ are distributed around the true value $\al_0$. 

\hfill\\
\textit{$\lra$ Position of Figure 1} \\

\begin{figure}[ht]
\begin{center}
\subfloat[][]{\includegraphics[height=6cm,page=2]{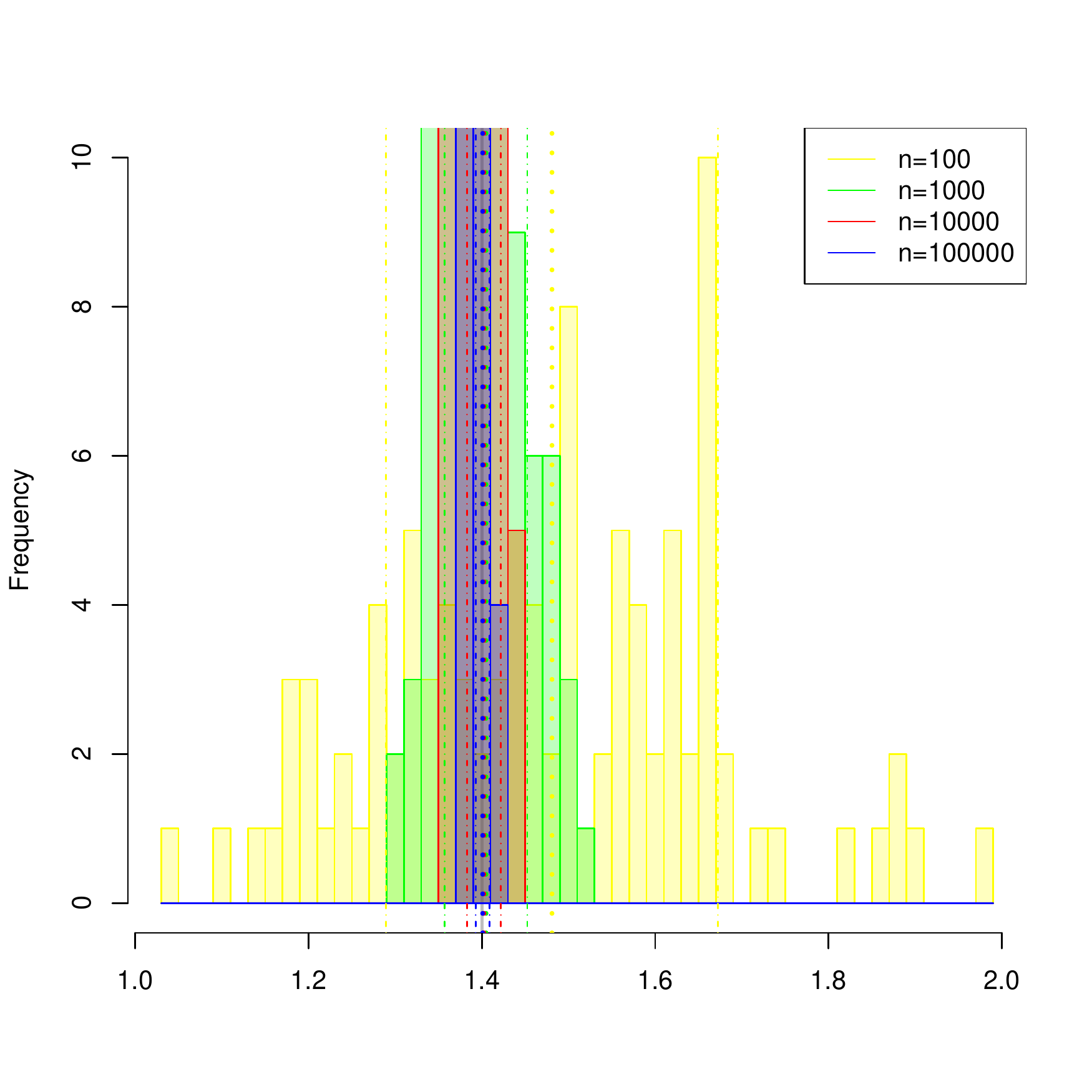}}
\subfloat[][]{\includegraphics[height=6cm,page=1]{./file_chaosB}}
\\
\subfloat[][]{\includegraphics[height=6cm,page=4]{./file_chaosB}}
\subfloat[][]{\includegraphics[height=6cm,page=3]{./file_chaosB}}
\\
\subfloat[][]{\includegraphics[height=6cm,page=6]{./file_chaosB}}
\subfloat[][]{\includegraphics[height=6cm,page=5]{./file_chaosB}}
\end{center}
\caption{Simulations of the empirical distance curves $\alpha \mapsto \wnt(\al; \al_0)$ 
and the empirical minimal distance estimator $\widehat\alpha_n$ for distinct values of $\al_0$: 
(a) and (b) show $\al_0 =1.4$; (c) and (d) show $\al_0 = 1.8$; 
(e) and (f) show $\al_0 = 3.0$. }
\label{fig:chaosB}
\end{figure}

All sample curves in Figure \ref{fig:chaosB}(a),(c),(e) exhibit unique minima distributed around the true value $\alpha_0$. 
It is apparent that the minimum distances decrease with increasing observation length $n$. 
From Figure \ref{fig:chaosB} (b),(d),(f) it is clear that 
the empirical distribution of the minimum distance estimator $\widehat\alpha_n$ is centered at $\alpha_0$ with decreasing variance 
as $n$ increases. This behavior is also seen in Table \ref{tab:chaosB1}, which displays 
the empirical mean and empirical variance of the minimum distance estimator $\widehat\alpha_n$ 
and the minimum distance statistic $\widehat\wnq^*$ 
for the different observation lengths $n$ and $\alpha_0$. 

In order to compare the empirical and the asymptotic convergence rates of 
the empirical Wasserstein distance $\widehat\wnq^*$ to $0$ 
we compare the sample mean and variance of $\widehat \wnq^*$ 
to the asymptotic upper bound for the rate of convergence given in Theorem~\ref{thm: rate of convergence}. 
(Tables ~\ref{tab:quotient table for the sample mean of w} and \ref{tab:quotient table for the sample variance of w})
By Theorem~\ref{thm: rate of convergence} the statistic $\widehat\wnq^*$ converges for increasing 
$n$ with order $n^{-\kappa}$, for any $0< \kappa < \frac{\al}{\al+2}$. 
It is therefore instructive to compare 
the convergence of the sample mean of $\widehat\wnq^*$ to $0$ with $n^{-\frac{\al_0}{\al_0+2}}$ 
on a logarithmic scale. 
From inspection % 
of the empirical means of $\widehat \wnq^*$ (the dotted horizontal lines in Figures \ref{fig:chaosB} (a),(c),(e)) 
it is apparent that the sample means 
for different powers of 10 are evenly spaced when scaled logarithmically. 
For increasing power of $10^i$, $i=2, \dots, 5$ we compare \[Q_{i,i+1}  = \mbox{sample mean}(\widehat{\mathrm{w}_{10^{i+1}}^*})
/ \mbox{sample mean}(\widehat{\mathrm{w}_{10^{i}}^*})\] with the respective power of $10^{-\frac{\alpha_0}{\alpha_0+2}}$ (Table \ref{tab:quotient table for the sample mean of w}). 
For the small values of $\al_0 =1.4$ and $\al_0 =1.8$ we obtain already for small sample sizes a reasonably good scaling. 
For $\al_0 =3.0$ 
the empirical ratio is of the same order of magnitude but larger than the asymptotic one. 
The linear scaling of the standard deviation of $\hat{w^*}$
makes it appropriate to compare the analogous quantity 
\[R_{i,i+1}  = \sqrt{\mbox{sample var}(\widehat{\mathrm{w}_{10^{i+1}}^*})
/ \mbox{sample var}(\widehat{\mathrm{w}_{10^{i}}^*})}\]
for the (sample) standard variation with the same quantity $10^{-\frac{\alpha_0}{\alpha_0+2}}$. 
We see a reasonable fit of the scaling factors for $\al_0 = 1.4$ and $\al_0 = 1.8$ and larger errors for $\al_0 = 3.0$. 
This overall picture indicates that the asymptotic rates of convergences are attained faster for smaller values of $\al_0$.

\hfill\\
\textit{$\lra$ Position of Table 1}\\ 

\begin{table}
\centering
\begin{displaymath}
\begin{array}{ll|ll|ll|ll|ll}
&  & & n=10^2 & & n=10^3 & & n=10^4 & & n=10^5\\
\alpha_0 &  &\widehat\alpha_n &\widehat{\wnq^*}&\widehat\alpha_n &\widehat{\wnq^*}&\widehat\alpha_n &\widehat{\wnq^*}&\widehat\alpha_n &\widehat{\wnq^*} \\ 
\hline
1.4 & \text{mean} & 
1.48 & 1.30 \times 10^{-1} & 1.40   & 4.74  \times 10^{-2} & 1.40  & 1.83 \times 10^{-2} & 1.40 & 6.63 \times 10^{-3}\\
& \text{var} & 
3.67 \times 10^{-3} & 2.71 \times 10^{-3} & 2.34 \times 10^{-3} & 2.02 \times 10^{-4} & 3.94 \times 10^{-4} & 2.86 \times10^{-5} & 6.48\times 10^{-5} & 3.99 \times 10^{-6} \\ 
\hline
1.8 & \text{mean} &
1.95 & 8.80 \times 10^{-2} & 1.81 \times 10^{-2} & 3.03 \times 10^{-2} & 1.81  & 1.05 \times 10^{-2} & 1.80 & 3.40 \times 10^{-3}  \\
& \text{var}  &
6.03 \times 10^{-2} & 1.54 \times 10^{-2} & 5.31 \times 10^{-3} & 0.12 \times 10^{-3}& 0.81 \times 10^{-3} & 1.37 \times 10^{-5} & 0.20 \times 10^{-3} & 1.29\times 10^{-6}\\ 
\hline
3.0 & \text{mean} &
3.17 & 5.08 \times 10^{-2} & 3.02 & 1.76\times 10^{-2} & 3.00 & 0.48 \times 10^{-2} & 3.00 & 0.16 \times 10^{-2}\\
& \text{var} &
0.20 & 0.70 \times 10^{-3} & 2.14 \times 10^{-2} & 9.14\times 10^{-5} & 0.32 \times 10^{-2} & 8.06\times 10^{-6} & 0.51 \times 10^{-3} & 7.72\times 10^{-7}
\end{array}
\end{displaymath}
\caption{Sample mean and variance of the empirical location of the minimum $\widehat \al_n$ and the 
empirical Wasserstein distance $\widehat\wnq^*$ 
for increasing observation length $n$ and different values of $\al_0$}
\label{tab:chaosB1}
\end{table}

\hfill\\
\textit{$\lra$ Position of Table 2} \\
\begin{table}
\begin{center}
\begin{displaymath}
\begin{array}{l|l|lll}
\alpha_0 & 10^{-\frac{\alpha_0}{\alpha_0+2}} & Q_{2,3} & Q_{3,4} & Q_{4,5} \\ \hline
1.4 & 0.39 & 0.37 & 0.39 & 0.32\\ \hline
1.8 & 0.34 & 0.34 & 0.35 & 0.32\\ \hline
3.0 & 0.25 & 0.35 & 0.27 & 0.33
\end{array}
\end{displaymath}
\caption{Quotient $Q_{i,i+1}  = \mbox{sample mean}(\widehat{\mathrm{w}_{10^{i+1}}^*})
/ \mbox{sample mean}(\widehat{\mathrm{w}_{10^{i}}^*})$}
\label{tab:quotient table for the sample mean of w}
\end{center}
\end{table} 

\hfill\\
\textit{$\lra$ Position of Table 3} \\
\begin{table}
\begin{center}
\begin{displaymath}
\begin{array}{l|l|lll}
\alpha_0 & 10^{-\frac{\alpha_0}{\alpha_0+2}} & R_{2,3} & R_{3,4} & R_{4,5} \\ \hline
1.4 & 0.39 & 0.31 & 0.34 & 0.37 \\ \hline
1.8 & 0.34 & 0.28 & 0.34 & 0.31\\ \hline
3.0 & 0.25  & 0.37 & 0.30 & 0.31
\end{array}
\end{displaymath}
\caption{Quotient $R_{i,i+1}  = \sqrt{\mbox{sample var}(\widehat{\mathrm{w}_{10^{i+1}}^*})
/ \mbox{sample var}(\widehat{\mathrm{w}_{10^{i}}^*})}$  
}
\label{tab:quotient table for the sample variance of w}
\end{center}
\end{table}

\subsection{Sensitivity analysis for different cutoffs } \label{subsec: cutoff sensitivity}

In the previous subsection we fixed the cutoff $\rho = 0.5$ 
and varied the values of $\al_0$ and $n$ 
in order to study the convergence of the minimal distance of the curves 
$\alpha \mapsto \wnt(\al; \al_0)(\om)$.  
We will now present a sensitivity analysis of these curves for different values of $\al_0$ and higher cutoffs $\rho$, 
for fixed observation number $n=10^5$. 
In the case of a pure power tail L\'evy measure, 
increasing the cutoff has the effect of choosing a random subsample of the original data, 
with the same tail. 
Therefore, we expect increasing $\rho$ to yield samples of curves with 
minima at the same location but taking larger values. 

Figure \ref{fig:chaosC2} illustrates the 
sensitivity of the distance functional 
to different cutoff values. 
We fix $\rho_0 = 0.5$. 
Then for each value $\al_0 = \{0.7, 1.4, 1.8, 2.3, 3.0, 4.0\}$ we simulate a sample of 
$m= 100$ noise realizations $\om$ with $n= 10^5$ jump increments and 
calculate the functions $\al \mapsto \wnt(\al; \al_0)(\om)$ 
for those increments $x_i$  such that $|x_i| \gqq \rho$, 
separately for the different values $\rho = \{0.5, 0.7, 1.4, 1.8\}$. 
For each of the values $\al_0$ we obtain 
$4$ sets of $m=100$ such curves, 
illustrated using different colours for each value of $\rho$. 

In Figure \ref{fig:chaosC2} it is evident that for fixed $\al_0$ 
the position of the minima of the curves does not depend on the cutoff values. 
However, while for fixed cutoffs the shape of the curves remains essentially the same 
(cf. Figure \ref{fig:chaosB} (a),(c),(e)), 
we see a change of shape for different cutoffs, best seen in picture Figure \ref{fig:chaosC2}(f). 
The logarithmic vertical scale hence 
indicates a polynomial growth behavior. 
Larger cutoff values generally result in both larger values of the coupling distance 
and wider troughs around the minimal value of $\alpha$, 
while larger values of $\al_0$ produce flatter curves 
with broader sampling distributions of the minima.

\subsection{Estimator behavior for non-polynomial Gaussian data} 

For the applications to empirical 
data sets it is important to show that the curves 
$\alpha \mapsto \wnt(\al; \al_0)(\om)$
distinguish clearly between data coming from distributions 
with and without a polynomial tail. 
As prototype non-polynomial data we use standard Gaussian data. 

Figure \ref{fig:CPPnoise} illustrates on the left side 
the increments of one single trajectory 
of a compound Poisson process with polynomial L\'evy measure $\nu^{\al_0}$ 
for tail index $\al_0= 1.6$, intensity $1$ and $\rho_0 = 0.5$ 
with $n=10^4$ increments, while on the right side we see the increments 
of a compound Poisson process with standard Gaussian L\'evy measure $N(0,1)$, 
intensity $1$ and $\rho_0 = 0.1$. 
In both cases, the horizontal axis is not continuous time but the discrete set 
of Gamma arrival times at which the jumps occur. 

Figure \ref{fig:Simulation-cutoff-analysis} 
shows on the left side 
the family of curves $[1.0, 2.0] \ni \alpha \mapsto \wnt(\al; \al_0, \rho)(\om)$, 
obtained from the compound Poisson process with polynomial L\'evy measure 
as the cutoff $\rho$ varies from $0.1$ to $0.7$. 
The thick line is the locus of minima of the individual curves. 
The plot on the right side of this Figure shows the family of curves 
\begin{equation}\label{eq:Gaussian_functional}
[2.0, 14.0] \ni \alpha \mapsto \int_0^1 \big(|(F_{10^4}^{N(0,1)}(\om))^{-1}(u)-(F^{\al})^{-1}(u)|\wedge 1\big)^2 du (\al; \al_0, \rho)(\om),
\end{equation}
where $u\mapsto (F_n^{N(0,1)}(\om))^{-1}(u)$ is the empirical quantile function of the increments of the compound Poisson process with the Gaussian L\'evy measure, 
estimated with $n$ realizations of $N(0,1)$ increments at realization $\om$. 
This functional is the analogue of $\wnt(\al; \al_0, \rho)$ defined in (\ref{eq: random curve statistics}) for the standard normal distribution 
$N(0,1)$. 

It is striking 
that for the polynomial L\'evy process, the locus of the curve minima of $[1.0, 2.0] \ni \alpha \mapsto \wnt(\al; \al_0, \rho)(\om)$ 
moves towards larger values of $\alpha$ with increasing $\rho$ 
until the original value $\al_0 = 1.6$ is reached at $\rho_0 = 0.5$. 
In addition the values of the minimum coupling distance %the heights of the minima values distances indicated by the fat red curve 
decrease across this parameter range before dropping sharply just before $\rho = \rho_0 = 0.5$. 
For values of $\rho$ above $\rho_0 = 0.5$, 
the position of the minimum does not change while its value increases slightly. 
These results imply that the locus of minimal distances for a perfectly simulated power law  
attains its global minimum at the true cutoff value $\rho_0$ and the true power law index $\alpha_0$ of the simulation. 
Figure \ref{fig:Simulation-cutoff-analysis} also presents the number of observations 
used in the curve estimate as a function of the cutoff. 
For cutoffs which are smaller than $\rho_0$ all points are used by the estimator.  
As $\rho$ increases above $\rho_0$, the number of points entering the calculation decreases. 

The corresponding figure for the Gaussian L\'evy process 
shows that in the case of a non-polynomial behavior the locus of minima 
for different cutoffs $\rho$ 
shows no distinct minimum. 
While a weak local minimum is reached at $\alpha \approx 8$, 
rather than staying near this value as $\rho$ increases 
the locus of minima continues to move towards the right. 
For sufficiently large values of the cutoff, the minimum coupling distances 
for individual curves start to behave erratically due to small sample sizes. 

Figure \ref{fig:scaling} presents a plot 
of the family of curves $[1.0, 2.0] \ni \alpha \mapsto \wnt(\al; \al_0, \rho)(\om)$, 
as in the left Figure \ref{fig:Simulation-cutoff-analysis}, 
with the following difference: we have rescaled the data by the factor $2$, 
the lower cutoff has been taken as $\rho = 2\times 0.5$, and the constant $s$ in 
the coupling distance is taken to be $s=2^2$. 
It can be checked easily that the cutoff distances in this family of curves scales as $2^2 = 4$. 
We see that by appropriate rescaling of the data, the cutoff $\rho$ and the coupling 
constant $s$ we obtain exactly the same curve as the one on the left side of Figure \ref{fig:Simulation-cutoff-analysis}, 
only with appropriately rescaled values of the coupling distance. 
As a consequence our procedure allows to adopt a 
systematic non-dimensionalization of all data under consideration and 
to fix the coupling constant at $s=1$.  
Specifically we will rescale all data under consideration divided by their interquartile range. 
This measure is a particularly convenient measure of width of the distribution 
since it does not require the existence of moments. 
A second consequence consists in the fact that the value of the minimum distance 
depends on the scale of the data (e.g. on the units) 
and only the shape of the curves gives meaningful information. 

We have introduced the coupling distance as a truncated 
version of the Wasserstein distance, 
in order to accommodate processes that 
do not possess second-order moments, such as $\alpha$-stable processes. 
For Figure~\ref{fig:WasserVsCoupling} we have simulated 
a compound Poisson process with $\rho = 0.5$ and $\alpha_0 = 3.6$ 
in order to account for the existence of second moments and hence the Wasserstein-$2$ distance. 
We show plot of the family of curves $[2.0, 5.0] \ni \alpha \mapsto \wnt(\al; 3, \rho)(\om)$, 
where $\rho$ varies from $0.2$ to $0.8$, 
for values of $s=1$ (our standard setting) and $s= \infty$ (corresponding to the non-truncated Wasserstein distance). 
We recognize that both measures pick as minimal distance location 
a value close to $\al_0 = 3.5$. However, it is clearly visible that the Wasserstein-$2$ distance 
(which does not exist for $\al = 2$) has a pole at this value, which implies that 
for values $\al_0$ close to $2$ the rates of convergence which are derived in Section \ref{sec: simulations} 
cannot hold. As seen in Figure \ref{fig:Simulation-cutoff-analysis} the coupling distance 
works properly also for values $\al_0 \lqq 2$ and is hence more general. 
This fact is particularly useful in order to detect true $\alpha$-stable power laws, where $\alpha < 2$. 

\hfill\\
\textit{$\lra$ Position of Figure 2}\\

% \begin{figure}[ht]
% \begin{center}
% \subfloat[][]{\includegraphics[height=6cm,page=1]{./chaosC2}}
% \subfloat[][]{\includegraphics[height=6cm,page=2]{./chaosC2}}
% \\
% \subfloat[][]{\includegraphics[height=6cm,page=3]{./chaosC2}}
% \subfloat[][]{\includegraphics[height=6cm,page=4]{./chaosC2}}
% \\
% \subfloat[][]{\includegraphics[height=6cm,page=5]{./chaosC2}}
% \subfloat[][]{\includegraphics[height=6cm,page=6]{./chaosC2}}
% \end{center}
% \caption{Plots (a) - (f) correspond to $\al_0 \in \{0.7, 1.4, 1.8, 2.3, 3.0, 4.0\}$, 
% where on each plot you see four samples of $m=10$ curves 
% with the cutoffs $\rho \in \{0.5, 0.7, 1.0, 1.8\}$ which are color coded}
% \label{fig:chaosC2}
% \end{figure}

\begin{figure}[ht]
\begin{center}
\includegraphics[height=6cm]{./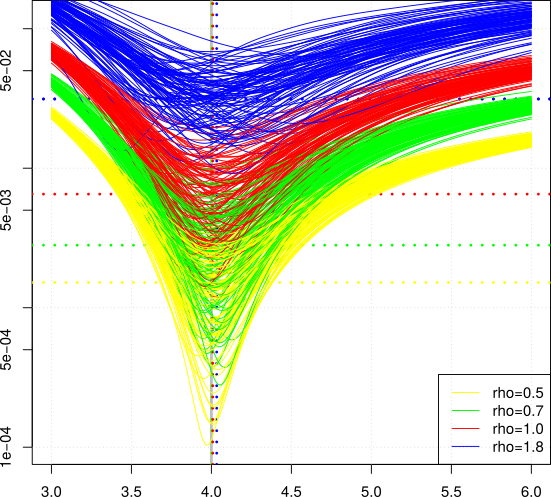}
\includegraphics[height=6cm]{./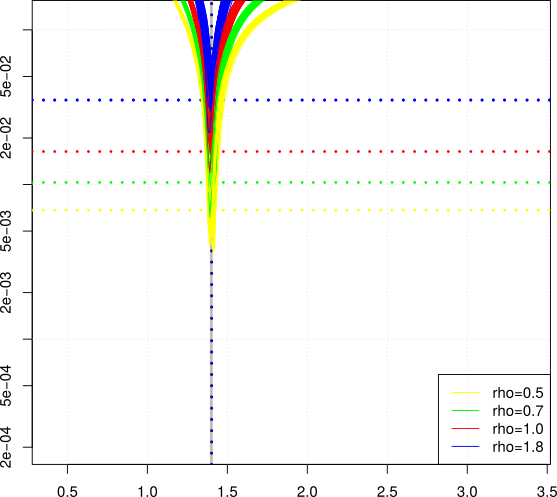}
\\
\includegraphics[height=6cm]{./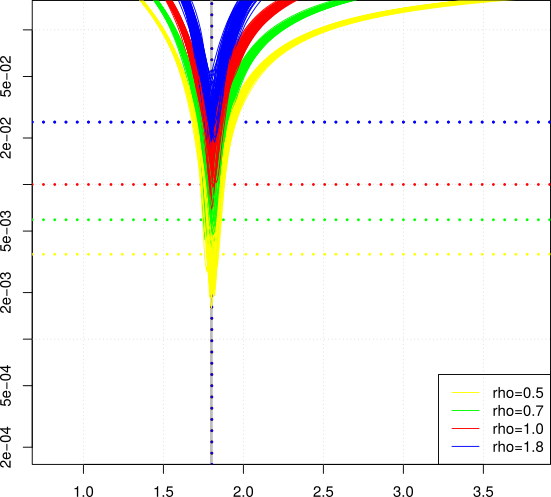}
\includegraphics[height=6cm]{./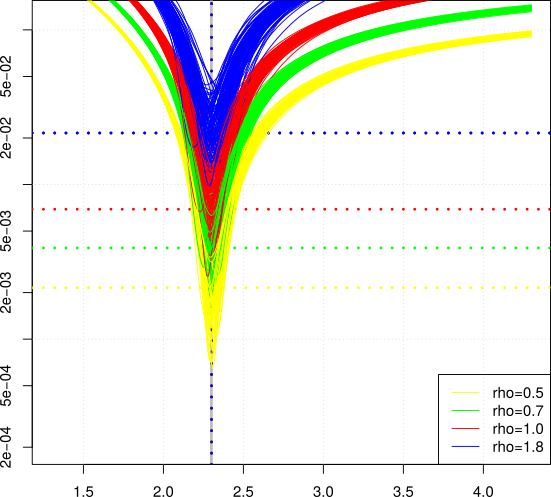}
\\
\includegraphics[height=6cm]{./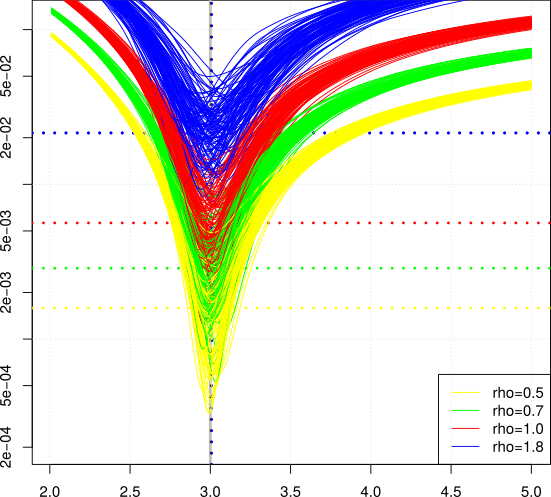}
\includegraphics[height=6cm]{./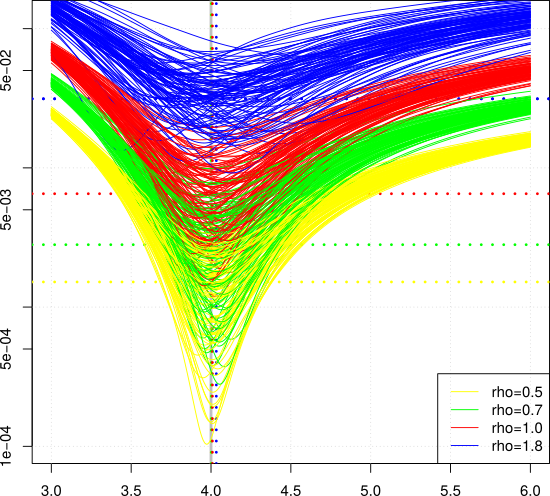}
\end{center}
\caption{Plots (a) - (f) correspond to $\al_0 \in \{0.7, 1.4, 1.8, 2.3, 3.0, 4.0\}$, 
where on each plot you see four samples of $m=10$ curves 
with the cutoffs $\rho \in \{0.5, 0.7, 1.0, 1.8\}$ which are color coded}
\label{fig:chaosC2}
\end{figure}

\hfill\\
\textit{$\lra$ Position of Figure 3}\\

\begin{figure}[ht]
\begin{center}
\includegraphics[height=8cm]{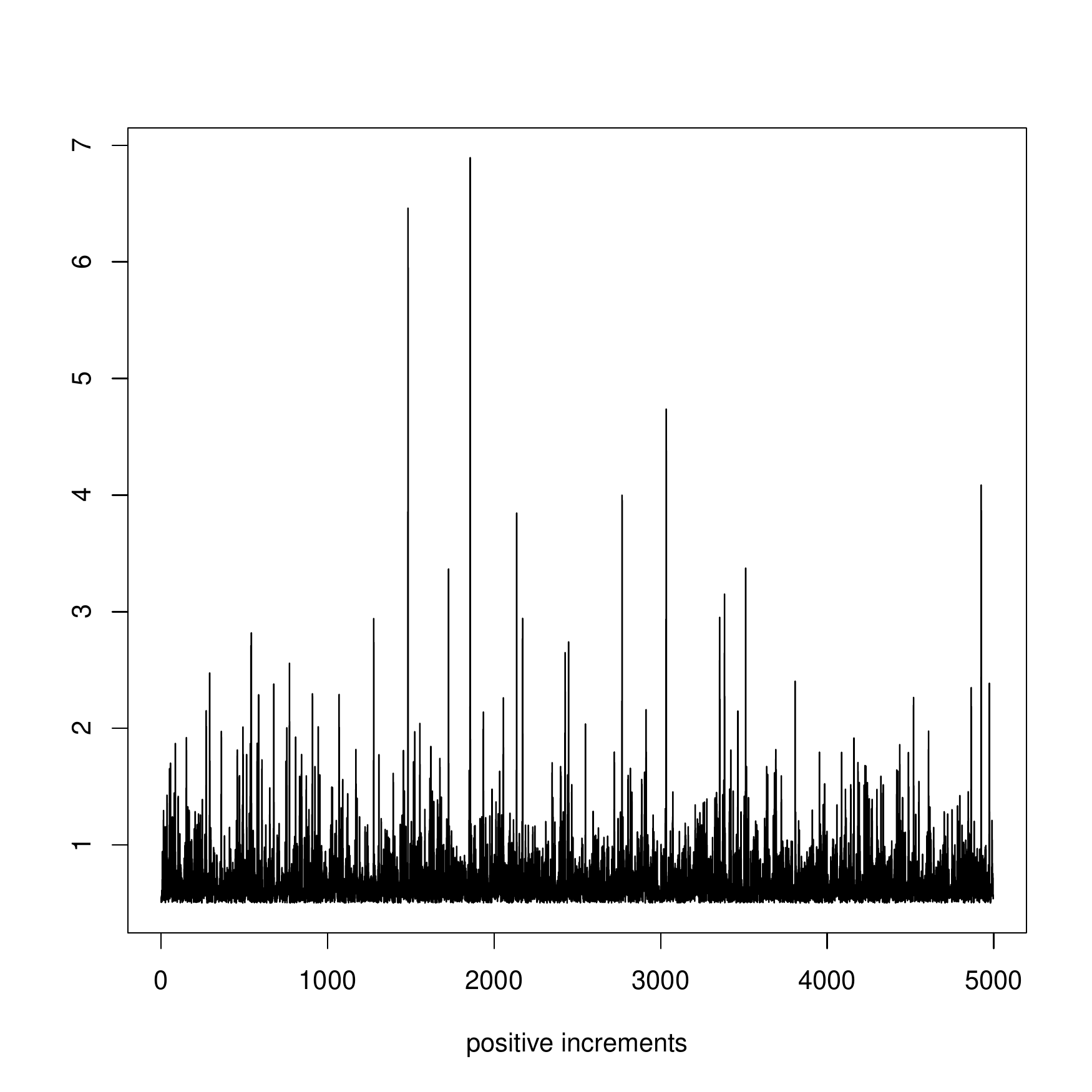}
\includegraphics[height=8cm]{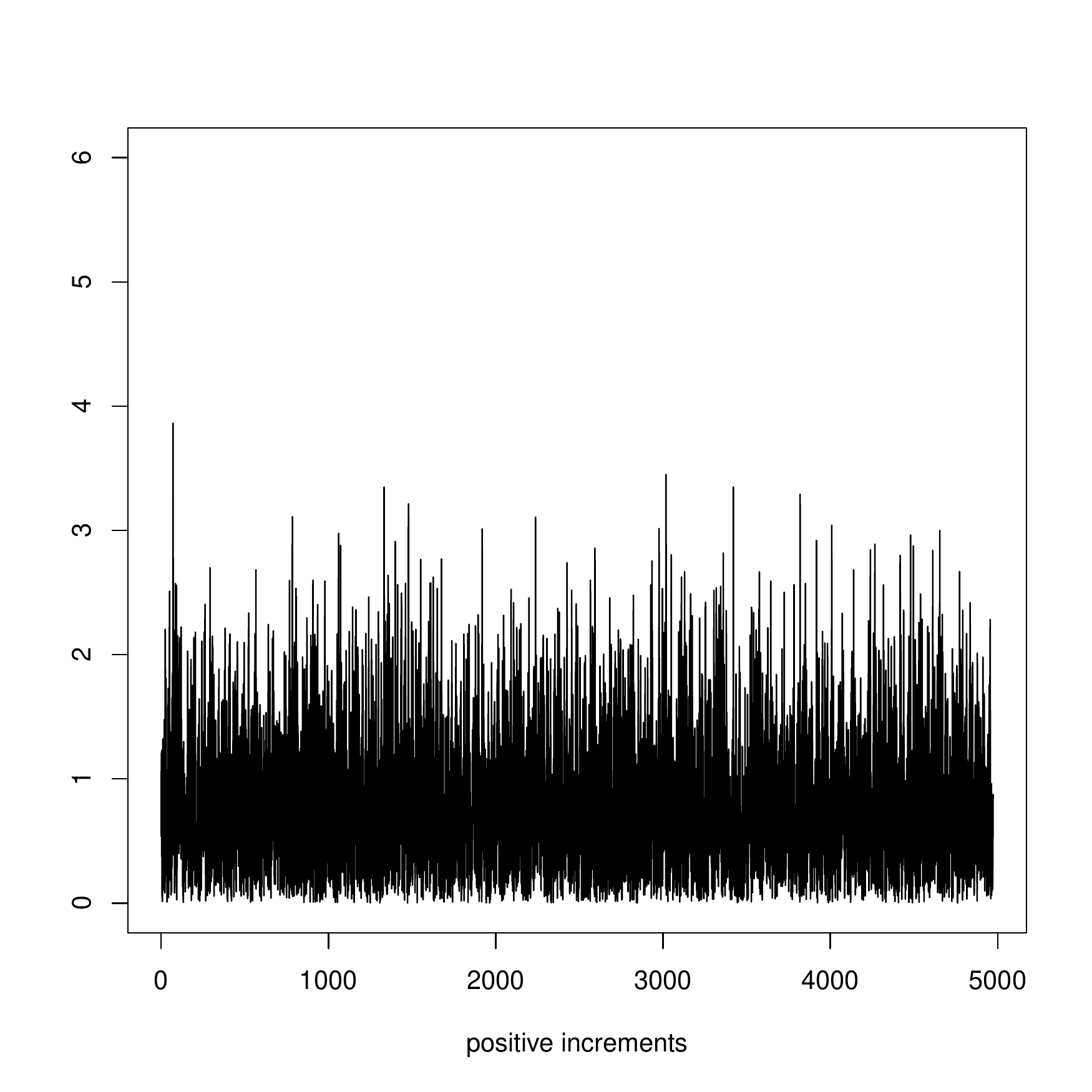}
\end{center}
\caption{Simulation of the increments of a single realization of a 
compound Poisson process of intensity $1$ for 
tail index $\al_0 = 1.6$ and lower cutoff $\rho  = 0.5$ (left side) 
a compound Poisson process of intensity $1$ with standard normal $N(0,1)$ jump measure (right side). 
}
\label{fig:CPPnoise}
\end{figure}

\hfill\\
\textit{$\lra$ Position of Figure 4}\\ 

\begin{figure}[ht]
\begin{center}
\includegraphics[height=8cm,page=1]{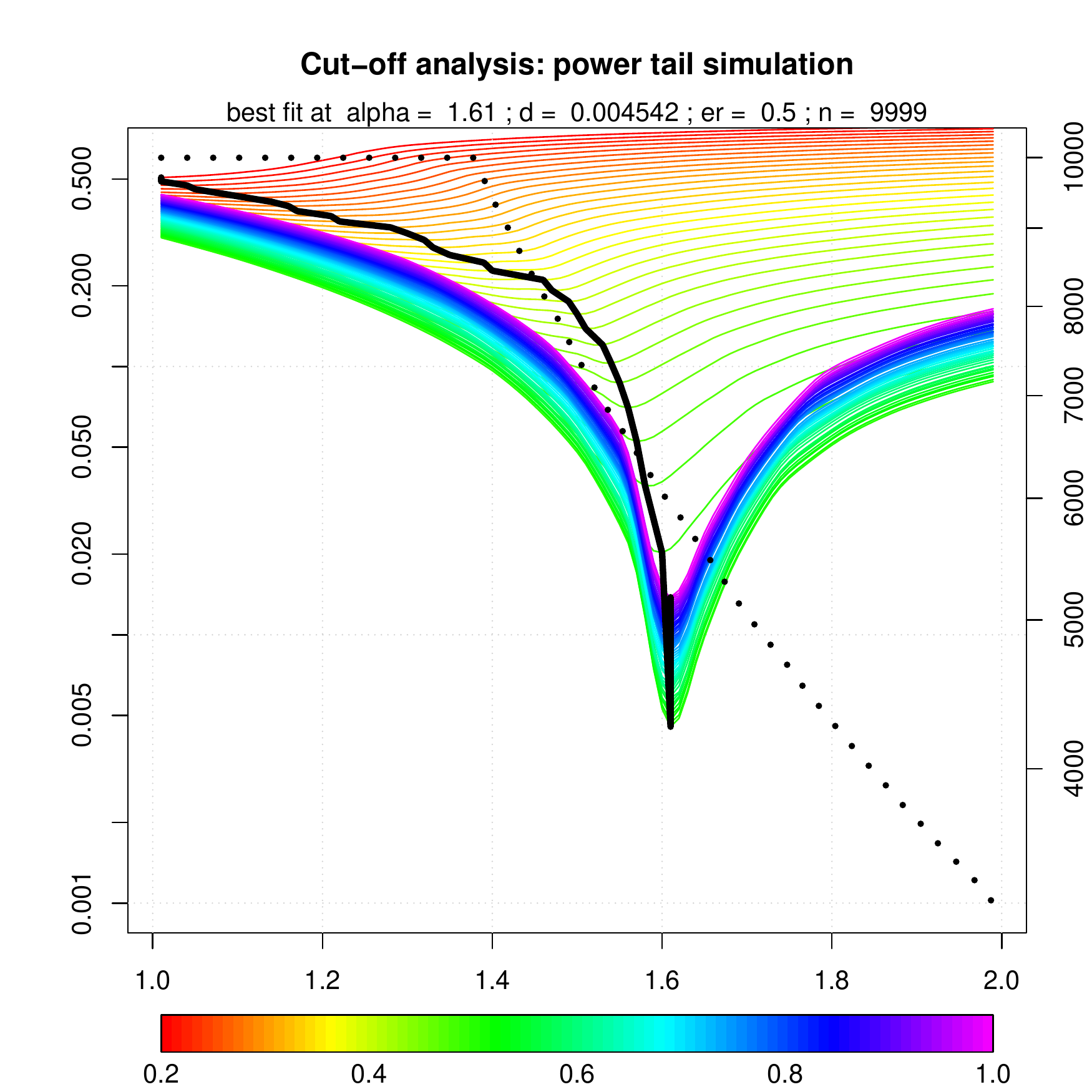}
\includegraphics[height=8cm,page=1]{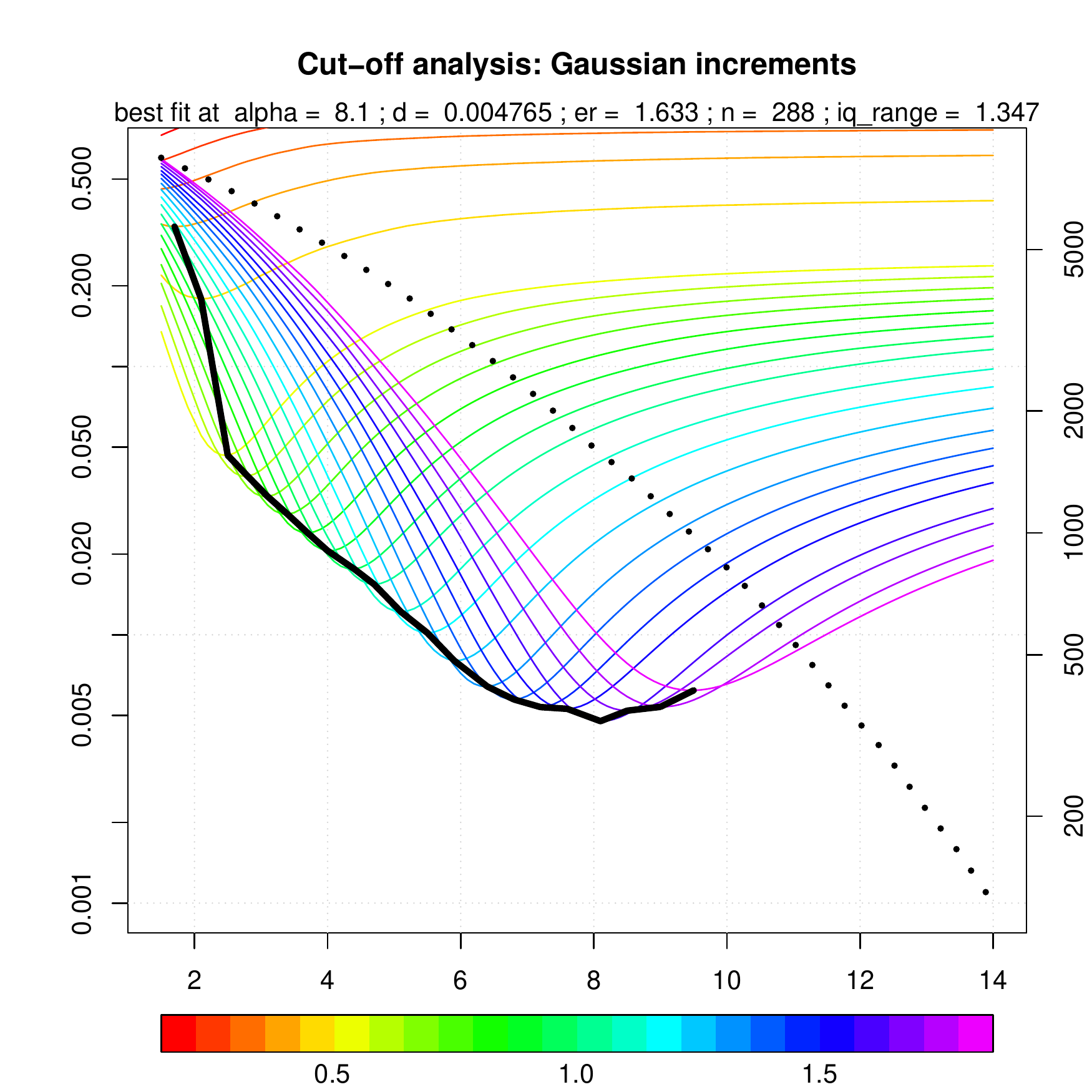}\\
\end{center}
\caption{Cutoff sensitivity analysis for simulated data of a 
compound Poisson process of intensity $1$ with renormalized jump measure $\nu^{\al_0}$ 
with tail index $\al_0 = 1.6$ and lower cutoff $\rho  = 0.5$ (left side) and 
a compound Poisson process of intensity $1$ with standard normal $N(0,1)$ jump measure (right side). 
The number of points included for each colored curve 
is color coded as follows. 
If one picks a curve and detects its color (dark blue, say) 
and looks for the value of the black dashed line at the position of that (dark blue) color 
on the color spectrum one can read off 
on the right side the number of included points. 
}
\label{fig:Simulation-cutoff-analysis}
\end{figure}

\hfill\\
\textit{$\lra$ Position of Figure 5}\\

\begin{figure}[ht]
\begin{center}
\includegraphics[height=8cm,page=2]{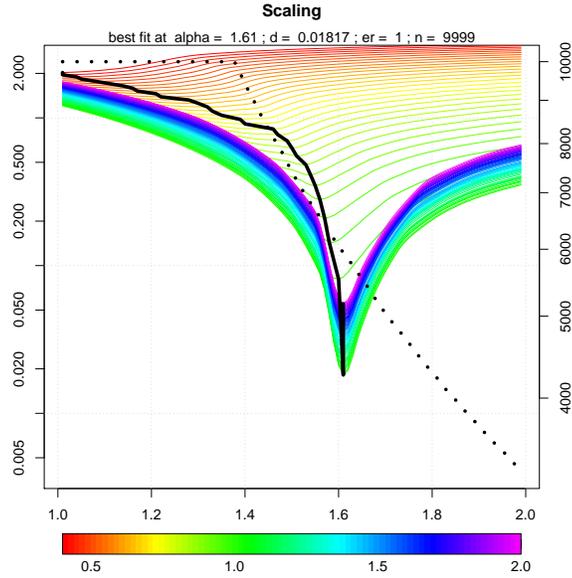}
\end{center}
\caption{Scaling property of the coupling distance: 
Cutoff sensitivity analysis for simulated data of a 
compound Poisson process of intensity $1$ for with renormalized jump measure $\nu^{\al_0}$ 
with tail index $\al_0 = 1.6$. 
Here the data were multiplied with $2$, the lower cutoff is taken $\rho  = 2\times 0.5$ and the coupling distance cutoff $s=2^2$. 
}
\label{fig:scaling}
\end{figure}

\hfill\\
\textit{$\lra$ Position of Figure 6}\\ 

\begin{figure}[ht]
\begin{center}
\includegraphics[height=8cm,page=1]{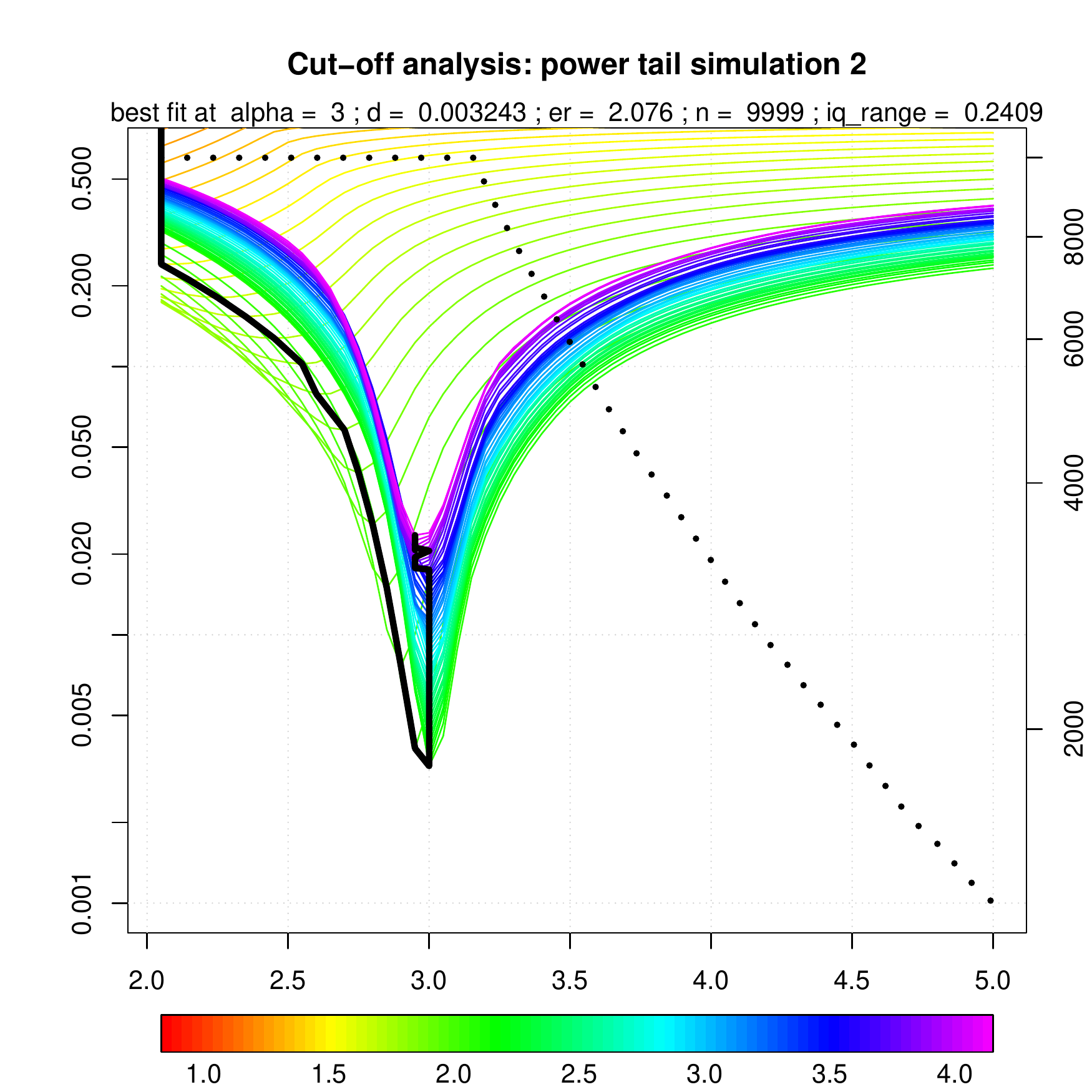}
\includegraphics[height=8cm,page=2]{./WassersteinVsCoupling}\\
\end{center}
\caption{Cutoff sensitivity analysis for simulated data of a 
compound Poisson process of intensity $1$ with renormalized jump measure $\nu^{\al_0}$ 
with tail index $\al_0 = 1.6$ and lower cutoff $\rho  = 0.5$ (left side) and 
a compound Poisson process of intensity $1$ with standard normal $N(0,1)$ jump measure (right side)}
\label{fig:WasserVsCoupling}
\end{figure}

\section{Application to example climatic data sets} \label{sec: climate examples}

In this section we will apply the estimators discussed above to two sets of climatic data. 
Because the estimators as constructed assume stationary stochastic increments, 
that is essentially additive noise models with autonomous coefficients, 
the datasets we consider will be those for which the assumption of this increment stationarity is reasonable. 

The first example which we discuss 
is a paleoclimatic time series of calcium concentration proxies (cf. \cite{FNAM93})
which has received considerable interest in recent years.  
The second class of examples are data sets of precipitable water vapor 
from different measurement stations in the Western Tropical Pacific.  

\subsection{Paleoclimatic GRIP data }\label{subsec: GRIP}

Paleoclimate proxy data from Greenland ice cores show clear evidence of two distinct states of high-latitude 
climate during the past glacial period (e.g. %Monahan et al. 2008 
\cite{MAW08}). Residence time within both 
the relatively warm interstadial state and the relatively cool stadial state are in on order of $1000$ years 
(e.g. 
\cite{Dit99a},
\cite{Ra03}). Using the logarithm of the calcium concentration record (a measure of continental 
aridity which correlates well with proxy estimates of high-latitude temperature) from the 
Greenland Ice Core Project (GRIP), Ditlevsen \cite{Dit99a} argued that the statistics of transitions between 
states shows evidence of $\alpha$-stable driving noise with $\alpha = 1.75$. 

\hfill\\
\textit{$\lra$ Position of Figure 7}\\

\begin{figure}[ht]
\begin{center}
\includegraphics[height=10cm]{./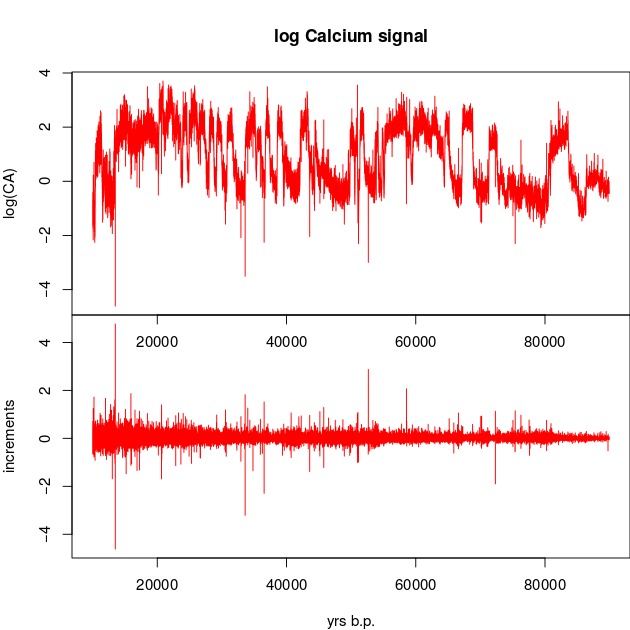}
\caption{Paleoclimatic GRIP data}
\label{fig:logCA}
\end{center}
\end{figure}

Motivated by the analysis in \cite{Dit99a} 
there has been a series of works 
on the model selection problem (\cite{GI14}, \cite{HIP08}, \cite{HIP09}), 
using different methods to estimate the tail index, 
which led to indications of $\al = 0.75$ (that is following Section 2 an $\alpha$-stable process even without first moments) 
and $\alpha = 1.75$ according with Ditlevsen's proposal. 
These methods have been based on the self-similar scaling of $\alpha$-stable processes, 
which turns out to be a rather fragile property, 
since it enhances small errors to large scales. 

In \cite{GHKK16} we applied an earlier version of the estimation procedure described in this study to the same 
GRIP log calcium time series analyzed in \cite{Dit99a}. 
Local best fits were obtained by tuning the parameters $\la$, $\rho$ and $\al$ \
for the positive tail at the cutoff $\rho^+ = 0.36$ and the index $\al_0^+ = 3.6$ 
and for the negative tail at $\rho^- = 0.35$ and the index $\al_0^- = 3.55$ with 
reasonable sample sizes $n^+ = 894$ and $n^- = 530$ and small distances $d^+ = 0.081 \ll 1$ and $d^- = 0.089 \ll 1$. 
A systematic optimization of the cutoff and the tails was not considered in this earlier study. 

In this subsection we will re-analyze this time series using 
the procedure described in the second part of Subsection~\ref{subsec: cutoff sensitivity}. 
First we standardize the tail intensity to $\la= 1$. % due to the \cred{renormalization of the compound Poisson measure}. 
We then search for the pair $(\rho, \alpha_0)$ such that the minimum of the curve 
$\alpha \mapsto \wnt(\al; \al_0, \rho)(\om)$ is minimal over all those parameter values. 

\hfill\\
\textit{$\lra$ Position of Figure 8}\\ 

\begin{figure}[ht]
\begin{center}
\subfloat[][]{\includegraphics[height=8cm,page=1]{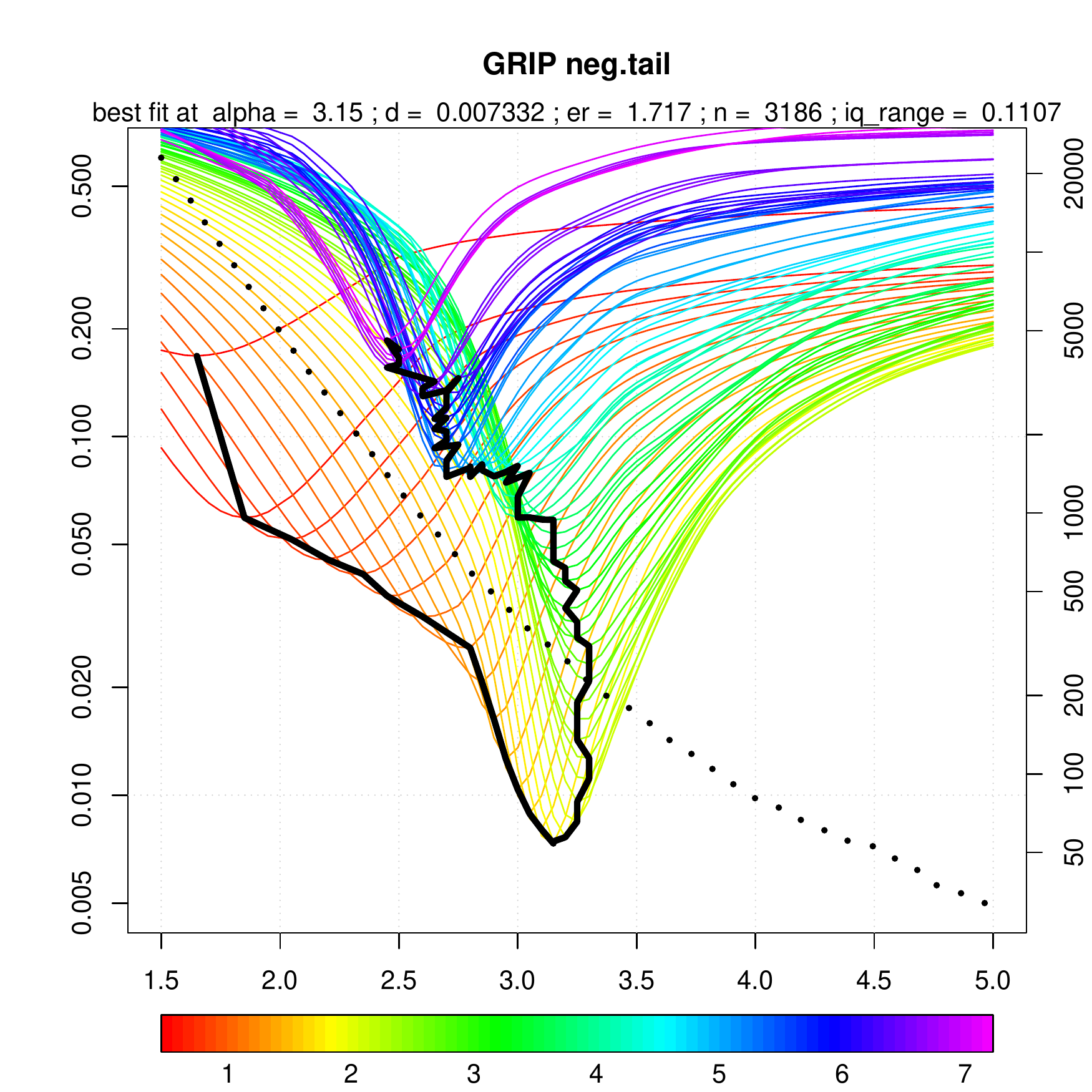}}
\subfloat[][]{\includegraphics[height=8cm,page=2]{./Grip_epsilon_sens}}
\end{center}
\caption{Plots of the functions 
$\alpha \mapsto \wnt(\al; \al_0, \rho)(\om)$ the negative and positive measure $\nu^{\al_0}$ for different parameters $\rho, \alpha_0$}
\label{fig:GRIP_epsilon_sens}
\end{figure}
% \vspace{10cm}
 
The GRIP log calcium time series and that of its increments are shown in Figure \ref{fig:logCA}.  
Both panels of Figure \ref{fig:GRIP_epsilon_sens} show the functions $\alpha \mapsto \wnt(\al; \al_0, \rho)(\om)$ 
for different cutoffs $\rho$ ranging 
from $0.5$ (orange-red) to $7$ (pink). 
The left side of Figure \ref{fig:GRIP_epsilon_sens} 
treats the negative tails of the empirical increment distributions 
and the right side the positive tails. 
For the negative tails the locus of minima yields a very distinctive 
minimum for a cutoff value of $\rho^- = 1.71$ 
with approximately $n=3186$ data points 
and with a minimal distance at $d^- = 0.00732$  
and the best fit for the tail index $\al^- \approx 3.15$. 
The minimal distance is of the magnitude 
of the simulated compound Poisson data of Figure \ref{fig:Simulation-cutoff-analysis}(left side) 
and indicates a clear polynomial behavior. 
For the positive tails this picture is even more pronounced with 
an even sharper minimum for approximately the same cutoff value $\rho^+ \approx 3.2$ 
with approximately $n= 1260$ data points 
and minimal distance of $d^+ = 0.00987$ 
and estimated tail index $\alpha^+ = 3.2$. 
For larger values of the cutoff, the curve of minima moves upward 
rather than laterally until the number of data points entering the analysis becomes 
very small. This relative insensitivity of the estimated tail index 
to the cutoff values is further evidence of polynomial tails in the driving noise. 

The results of this analysis provide strong evidence for polynomial tails 
in the driving process in these data, consistent with the results of \cite{HIP08}, \cite{GI14}, \cite{Ga11}.

However, consistent with our earlier estimates in \cite{GHKK16}, 
we find that the tail parameter $\alpha$ is greater than two, 
so the increment distribution of the driving noise process has finite variance. 
The detection of tail behaviour with $\al> 2$ in these data is noteworthy.
Classical homogenization or stochastic averaging results such as \cite{PK74} 
indicate that for coupled dynamics in which a slow variable is driven by 
a finite variance fast variable, 
the asymptotic dynamics in the limit of infinite scale separation should be a standard stochastic differential equation 
with Brownian noise. The presence of driving noise with finite-variance, polynomial-tailed increments in the GRIP data is suggestive 
that the timescale separation between fast and slow processes is not sufficiently large in this setting for the asymptotic 
dynamics to hold.

\subsection{Western Tropical Pacific precipitable water vapor data }

The second observational data we consider are 
two time series of hourly-mean precipitable water vapor (PWV, the total water content of the atmospheric column, in cm) 
from the islands Nauru ($0.5^{\circ}$ S, $166.9^\circ$ E) and Manus ($2.1^{\circ}$ S, $147.4^\circ$ E) in the Western 
Tropical Pacific. The Nauru PWV time series extends from January 1 1998 to December 31 2010, 
and the Manus time series from January 1 1996 to December 31 2011. These data were obtained from the ARM 
(Atmospheric Radiation Measurement) Best Estimate Data set (\cite{Xi10}), and downloaded from www.archive.arm.gov.

These data were investigated because the onset of convective precipitation is a threshold process that occurs 
when at least part of the atmospheric column becomes saturated with regards to water vapor. 
While in the non-precipitating state PWV can be considered to undergo gradual random variations, the transition to the 
precipitating state produces a rapid decrease in column water content. 
Recent analysis \cite{PDCN10} has shown 
that the probability distribution of rainfall event 
magnitudes has a power-law tail, as do dry spell duration and (to a lesser extent) precipitation event duration. 
Furthermore, \cite{SN11} and \cite{SN14} 
have demonstrated that a simple two-state precipitation on and off stochastic differential equation model for column moisture can reproduce the 
observed statistics of precipitation. The jump-like changes in PWV resulting from the rapid onset of precipitation suggest that on sufficiently coarse 
timescales the driving process of PWV could be represented as an alpha-stable process (for at least the negative increments).

\hfill\\
\textit{$\lra$ Position of Figure 9}\\

\begin{figure}[ht]
\begin{center}
\subfloat[][]{\includegraphics[height=8cm,page=1]{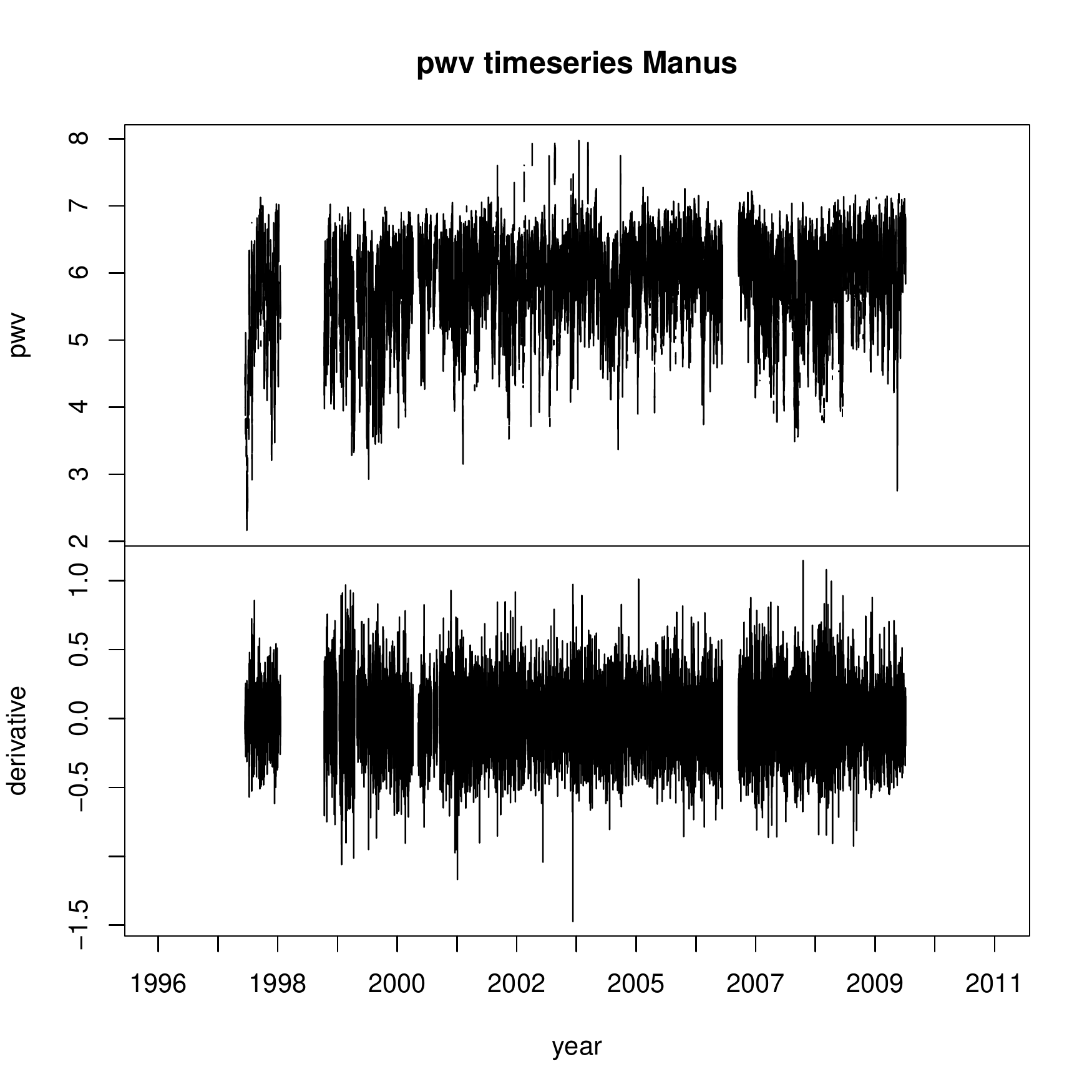}}\quad
\subfloat[][]{\includegraphics[height=8cm,page=2]{./ManusNauru1}}\\
\end{center}
\caption{$16$ years of precipitable water vapor data of the measurement station on Manus island 
and the respective data increments (a), the same for Nauru island (b) .}%and Darwin (c) and (d)
\label{fig:ManusNauruDarwin}
\end{figure}
 
\hfill\\
\textit{$\lra$ Position of Figure 10}\\

\begin{figure}[ht]
\begin{center}
\subfloat[][]{\includegraphics[height=8cm,page=1]{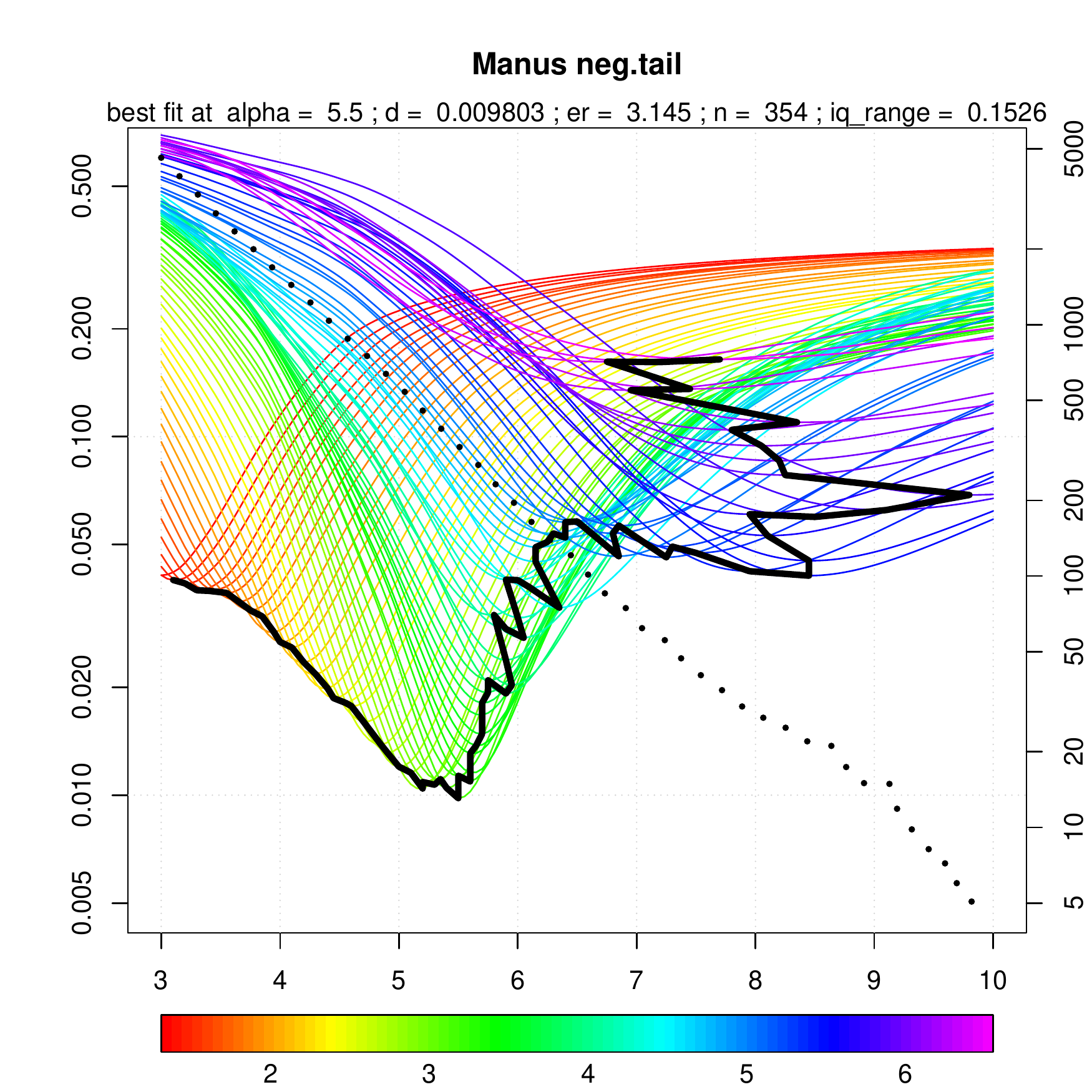}}
\subfloat[][]{\includegraphics[height=8cm,page=2]{./Manus_epsilon_sens}}\\
\subfloat[][]{\includegraphics[height=8cm,page=1]{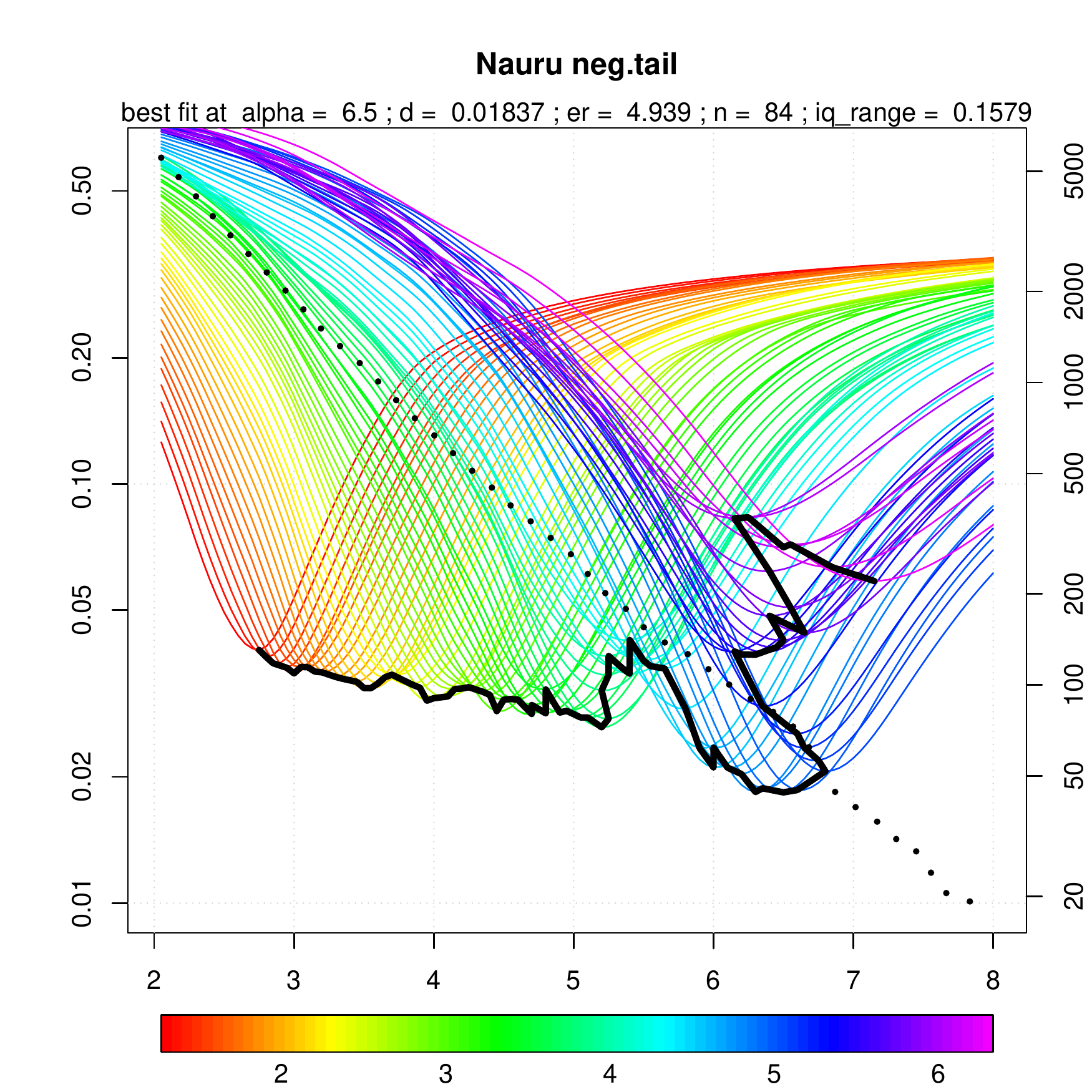}}
\subfloat[][]{\includegraphics[height=8cm,page=2]{./Nauru_epsilon_sens}}\\
\end{center}
\caption{Plots of the functions 
$\alpha \mapsto \wnt(\al; \al_0, \rho)(\om)$ the negative and positive measure $\nu^{\al_0}$ for different parameters $\rho, \alpha_0$ of the Manus island data (a) and (b) and Nauru island (c) and (d)}
\label{fig:ManuNauru_epsilon_sens}
\end{figure}

Time series of PWV and its increments from Manus and Nauru are shown in Figure 9. 
Unlike the time series at Manus, that at Nauru shows evidence of low 
frequency variations (in the local mean for PWV and in the amplitude of 
fluctuations for the increments). 
Plots of the functions $\alpha \mapsto \wnt(\al; \al_0, \rho)(\om)$ 
for the positive and negative tails of the increments distributions for both locations are shown in Figure 10. 
For the negative tail at Manus, the locus of curve minima over individual cut-offs $\rho$ takes 
a clear minimum at $\alpha_{0} = 5.5$ (corresponding to a cut-off of $\rho = 3.2$ and $\sim$150 data points), 
beyond which it rises with increasing $\rho$. The lateral meandering for $\rho \gtrsim 5$ is interpreted 
as resulting from the small number of data points entering the computation. 
In contrast, the locus of curve minima for the positive tail 
at Manus shows no evidence of a robust global minimum value.
Similar results are found at Nauru. The locus of curve minima for the 
negative tail shows a global minimum at $\alpha_{0} = 6.5$ 
($\rho \simeq 4.5$, $\sim 100$ data points), beyond which it increases. 
No evidence of a minimum is seen for the positive tail. 
The global minimum of the locus of curve minima for the 
negative tail is not as pronounced as at Manus, perhaps because 
of the low-frequency variability present in the PWV time series at Nauru.
These results show strong evidence of polynomial behavior in the negative tail of PWV, consistent with the effects of episodic
drying of the water column by convective precipitation events. 
No such threshold process produces jumps increasing PWV, and correspondingly no evidence
is found of polynomial tails for the positive increments. 
The difference in the values of $\alpha_{0}$ obtained for negative increments at the two locations (5.5 at Manus 
and 6.5 at Nauru) could reflect real differences in moist processes 
at the two stations or could simply be a result of sampling variability due to the
relatively short time series. The observed universality of precipitation event size 
distributions in \cite{PDCN10} suggests that sampling variability is likely a
large contributor to this difference. Finally, as with the ice core data considered above, 
the detection of polynomial tails in the PWV driving process with $
\alpha_{0} > 2$ indicates that the separation between fast and slow timescales in 
PWV dynamics is not sufficiently large for the central limit theorem to render
this driving process effectively Gaussian.

\section{Conclusion} 

This study presents a new method in time series analysis 
for the assessment of the proximity of data to power-law L\'evy diffusions. 
Our method is based on the notion of coupling distance recently introduced 
in the mathematics literature  
in order to measure the distance between such models on path space 
(Theorems \ref{Alex-theor} and \ref{Alex-theor-2}). 
The underlying statistical analysis involves a modification 
of the empirical Wasserstein distance, 
which is easily implementable and has favorable properties 
such as good asymptotic convergence rates as we have confirmed
by extensive simulation studies for different data lengths, tail exponents and cutoff thresholds. 
In particular, this statistic is consistent with the Wasserstein distance, 
whenever the latter exists. 

We stress that while this method provides 
insight into the noise structure of the large increments 
in the underlying data even on a pathwise 
level, it does not resconstruct 
the deterministic forcing $f$. 
For this purpose other methods 
analyzing the small 
data increments have to be applied. 
The construction of confidence intervals 
for the tail exponent is subject to future research. 
It is also remarkable that this method worked out in detail for 
the one-parameter family of power-law tail exponents 
can be adapted to other one-parameter families of L\'evy measures. 

The coupling distance curves 
introduced in this study 
provide a device for intuitive visual inspection 
in order to detect a power-law behavior of the 
driving jump noise. 
It is also shown how the characteristic structures  
of Gaussian and power-law processes 
are clearly distinguishable. 
In particular, this approach allows the detection of the 
presence of $\alpha$-stable signals, 
where the Wasserstein distance is not defined. 

The central statistical estimate of our method compares the empirical quantile 
function of the truncated increments of the time series to the quantile function 
of the candidate process being tested.  While other more intuitive approaches 
to this comparison (such as curve fitting using the empirical histogram of increments) 
may be simpler, our approach has the benefit of being rigorous, providing rate of convergence 
results, and of generalizing in a straightforward way to a broad class of stochastic 
differential equations (e.g. multiplicative or non-autonomous noise coefficients), 
driving processes (e.g. not necessarily alpha-stable) and parameters of interest 
(e.g. a skewness parameter instead of the tail exponent). 

The concept of coupling distance curves relies heavily on the 
knowledge of the explicit minimizer of the Wasserstein distance 
in one dimension. In higher dimensions, however,  
the optimal coupling is not known in closed form.  
Therefore any coupling will only provide upper bounds. 
Furthermore, it is worth noting that coupling distances were developed 
for models with additive noise (see the modeling assumptions 
in Section \ref{sec: procedure}). 
To treat more general stochastic differential equation 
models with multiplicative noise dependence 
the notion of transportation distance 
has been introduced \cite{GHK16}. 
Its statistical implementation is subject to further study. 

Our method was finally applied to an often-studied set of paleoclimate data 
and confirms our previous results in \cite{GHKK16}, where 
the tail exponents were determined by a non-systematic application of coupling distances. 
In addition, we systematically 
analyze a set of atmospheric data 
from the Western Tropical Pacific  
and detected polynomial tail exponents 
interpreted as resulting 
from the threshold behavior of precipitation. 
Future studies will generalize the approach considered 
in this study to allow analysis of time series with strong
deterministic non-stationarities such as annual or diurnal cycles.

\bigskip
\section*{Acknowledgements}

The first three authors would like to thank the International Research Training Group 1740 Berlin-S\~ao Paulo: 
Dynamical Phenomena in Complex Networks: Fundamentals and Applications, the group of Sylvie Roelly at University Potsdam 
and Alexei Kulik from the Ukrainian National Academy of Sciences. MAH acknowledges the support 
of the FAPA grant ``Stochastic dynamics of L\'evy driven systems'' of Universidad de los Andes. 
AHM acknowledges support from the Natural Sciences and Engineering Research Council (NSERC) of Canada.

\bigskip


\begin{thebibliography}{99} 
\bibitem{Ap09}
D.~Applebaum.
\newblock L{\'e}vy processes and stochastic calculus.
\newblock {\em Cambridge university press}, 2009.

\bibitem{BDvdG}
E.~del~Barrio,~P.~Deheuvels,~S.~van~de~Geer.
\newblock Lectures on Empirical Processes.
\newblock{\em Series of Lectures in Mathematics of the EMS}, 2007.

\bibitem{BenziEtal}
R.~Benzi, G.~Parisi, A.~Sutera, A.~Vulpiani.
\newblock Stochastic resonance in climate change.
\newblock{\em Tellus}, 34:10-16, 1982.

\bibitem{BerLan}
N.~Berglund, D.~Landon.
\newblock Mixed-mode oscillations and interspike interval statistics in the stochastic FitzHugh-Nagumo model.
\newblock{\em Nonlinearity}, 25:2303-2335, 2012.

\bibitem{BerGen}
N.~Berglund, B.~Gentz.
\newblock Metastability in simple climate models: Pathwise analysis of slowly driven Langevin equations.
\newblock{\em Stoch. Dyn.}, 2:327-356, 2002.

\bibitem{Billingsley:99}
P.~Billingsley.
\newblock Convergence of probability measures. 
\newblock{\em Wiley series in probability and statistics}, 1999.

\bibitem{BGT}
N.~H.~Bingham, C.~M.~Goldie, J.~L.~Teugels.
\newblock Regular variation. 
\newblock{\em Cambridge University Press}, 1987.

\bibitem{CH88}M.~Cs\"org\"o, C.~Horv\'ath, 
\newblock On the distributions of Lp norms and weighted uniform empirical and quantile processes.
\newblock \emph{The Annals of Probability}, (16) (1) 142--161, 1988.

\bibitem{CH}M.~Cs\"org\"o, C.~Horv\'ath, 
\newblock On the distributions of Lp norms and weighted quantile processes.
\newblock \emph{Annales de l'I.H.P.}, section B (26) (1) 65--85, 1990.

\bibitem{Dit99a}
P.D.~Ditlevsen.
\newblock Observation of a stable noise induced millennial climate changes from an ice-core record.
\newblock {\em Geophysical Research Letters}, 26 (10):1441--1444, 1999.

\bibitem{DoThieu}
C.~Doss, M.~Thieullen.
\newblock Oscillations and random perturbations of a FitzHugh-Nagumo system.
\newblock{\em Preprint hal-00395284}, 2009.

\bibitem{Fe71}
W.~Feller.
\newblock An introduction to probability theory and its applications. 
\newblock {\em Vol. II, John Wiley \& Sons}, 1971.

\bibitem{FNAM93}
K.~Fuhrer, A.~Neftel, M.~Anklin, V.~Maggi.
\newblock Continuous measurement of hydrogen-peroxide, formaldehyde, calcium
and ammonium concentrations along the new GRIP Ice Core from Summit, Central Greenland. 
\newblock {\em Atmos. Environ. Sect. A}, 27, 1873-1880, 1993.

\bibitem{Ga11}
J.~Gairing.  
\newblock Speed of convergence of discrete power variations of jump diffusions. 
\newblock \textit{Diplom thesis, Humboldt-Universit\"at zu Berlin, 2011}

\bibitem{GI14}
J.~Gairing, P.~Imkeller.
\newblock Stable CLTs and rates for power variation of $\alpha$-stable L\'evy processes.
\newblock {\em Methodology and Computing in Applied Probability}, 1-18, 2013.

\bibitem{DHI13}
A.~Debussche, M.~H\"ogele, P.~Imkeller.
\newblock The dynamics of non-linear reaction-diffusion equations with small {L}\'evy noise.
\newblock {\em Springer Lecture Notes in Mathematics, Vol. 2085}, 2013.

\bibitem{GHKK14} J.~Gairing, M.~H\"ogele, T.~Kosenkova, A.~Kulik.
\newblock Coupling distances between {L}{\'e}vy measures and applications to noise sensitivity of {SDE}.
\newblock{\em Stochastics and Dynamics}, 15 (2), 1550009-1 – 1550009-25 (2014), DOI:10.1142/S0219493715500094

\bibitem{GHKK16} J.~Gairing, M.~H\"ogele, T.~Kosenkova, A.~Kulik.
\newblock On the calibration of {L}{\'e}vy driven time series with coupling distances with an application in paleoclimate.
\newblock{\em in Springer-INdAM Series, vol. 15,} ``Mathematical Paradigms of Climate Sciences'' (2016), Springer, Milan, Heidelberg, ISBN 978-3-319-39091-8

\bibitem{GHK16} J.~Gairing, M.~H\"ogele, T.~Kosenkova.
\newblock  Transportation distances and noise sensitivity of multipicative {L}{\'e}vy {SDE} with applications.
\newblock{\em https://arxiv.org/abs/1511.07666}

\bibitem{Ha76}
K.~Hasselmann.
\newblock {S}tochastic climate models: Part {I}. {T}heory.
\newblock {\em Tellus}, 28:473--485, 1976.

\bibitem{HIP09}
C.~Hein, P.~Imkeller, I.~Pavlyukevich.
\newblock Limit theorems for p-variations of solutions of {S}{D}{E}s driven by additive stable {L}\'evy noise and model selection for paleo-climatic data.
\newblock {\em Interdisciplinary Math. Sciences}, 8:137-150, 2009.

\bibitem{HIP08}
C.~Hein, P.~Imkeller, I. Pavlyukevich.
\newblock Simple SDE dynamical models interpreting climate data and their meta-stability. 
\newblock Oberwolfach Reports, 2008 

\bibitem{HP13} M.~H\"ogele, I.~Pavlyukevich.
\newblock  The exit problem from the neighborhood of a global attractor for heavy-tailed {L}\'evy diffusions.
\newblock{\em Stochastic Analysis and Applications},  32 (1), 163--190 (2013)

\bibitem{IM02}
P.~Imkeller, A.~Monahan.
\newblock Conceptual stochastic climate models.
\newblock {\em Stochastics and Dynamics}, 2:311--326, 2002.

\bibitem{Im01}
P.~Imkeller.
\newblock {E}nergy balance models: {V}iewed from stochastic dynamics.
\newblock {\em Prog. Prob.}, 49:213--240, 2001.

\bibitem{IP06}
P.~Imkeller and I.~Pavlyukevich.
\newblock {F}irst exit times of {S}{D}{E}s driven by stable {L}\'evy processes.
\newblock {\em Stochastic Processes and their Applications}, 116(4):611--642, 2006.

\bibitem{IPW09}
P.~Imkeller, I.~Pavlyukevich, T.~Wetzel.
\newblock First exit times for L\'evy-driven diffusions with exponentially light jumps.
\newblock{\em Ann. Probab.}, 37 (2) 530-564, 2009.

\bibitem{NGRIP04}
Members {N}{G}{R}{I}{P}.
\newblock {N}{G}{R}{R}{I}{P} data.
\newblock {\em Nature}, 431:147--151, 2004.

% \bibitem{KosenkovaKulik}
% T.~Kosenkova, A.~Kulik.
% \newblock Explicit bounds for the convergence rate of a sequence of Markov chains to a L\'evy-type process.
% \newblock {\it In preparation}.

\bibitem{MAW08}
A. ~H. ~Monahan, J. ~Alexander, A.~J. ~Weaver 
\newblock Stochastic models of meridional overturning circulation: 
time scales and patterns of variablility.
\newblock {\em Phil. Trans. R. Soc. A}, 366: 2527-2544, 2008.

\bibitem{PK74} G.~C.~Papanicolau, W.~Kohler.
\newblock Asymptotic theory of mixing stochastic ordinary differential equations.
\newblock {\em {C}ommunications on {P}ure and {A}pplied {M}athematics}, Vol. XXVII, 641-668, 1974.

\bibitem{Pa11} I.~Pavlyukevich.
\newblock First exit times of solutions of stochastic differential equations with heavy tails.
\newblock {\em Stochastics and Dynamics}, 11, Exp.No.2/3:1--25, 2011.

\bibitem{PDCN10} Peters, O.~A.~Deluca, A.~Corral, J.~D.~Neelin, C.~E.~Holloway.
\newblock Universality of rain event size distributions. 
\newblock {\em J. Stat. Mech: Theory and Experiment.} 2010.
\newblock doi:10.1088/1742-5468/2010/11/P11030

% Peters, O. A. Deluca, A. Corral, J.D. Neelin, and C.E. Holloway, 2010. Universality of rain event size distributions. 

\bibitem{RR1998} S.~T.~Rachev, L.~R\"uschendorf.
\newblock Mass Transportation Problems. Vol.I: Theory, Vol.II.: Applications.
\newblock {\em Probability and its Applications}. Springer-Verlag, New York 1998.

\bibitem{Ra03} S.~Rahmstorf.
\newblock Timing of abrupt climate change: a precise clock.
\newblock {\em Geophys. Res. Let.}. 30, 1510, 2003.

\bibitem{Sato-99} K.~Sato.
\newblock {L{\'e}vy processes and infinitely divisible distributions}, 
volume~68 of {\em {Cambridge Studies in Advanced Mathematics}}.
\newblock {Cambridge University Press}, 1999.

\bibitem{SN11} S.~N.~Stechmann, J.~D.~Neelin.
\newblock A stochastic model for the transition to strong convection.
\newblock {\em J. Atmos. Sci.} 68, 2955-2970, 2011.
  
\bibitem{SN14} S.~N.~Stechmann, J.~D.~Neelin.
\newblock First-passage-time prototypes for precipitation statistics.
\newblock {\em J. Atmos. Sci.} 71, 3269-3291, 2014.

\bibitem{ST94} G.~Samoradnitsky, M.~Taqqu.
\newblock Stable non-Gaussian random processes.
\newblock {\em Chapman\& Hall}. Boca-Raton, London, New York, Washington D.C. 1994.

\bibitem{TucRodWan}
H.C.~Tuckwell, R.~Rodriguez, F.Y.M.~Wan,
\newblock Determination of firing times for the stochastic {F}itzhugh-{N}agumo neuronal model.
\newblock {\em Neural Computation}, 15:143 -- 159, 2003.

\bibitem{vWW}
A.W. van der Vaard, J.A.Wellner,
\newblock Weak convergence and empirical processes.
\newblock {\em Springer Series in Statistics}, 1996.

\bibitem{Xi10}
S.~Xie and coauthors,
\newblock ARM Climate Modelling Best Estimate Data: 
A New Data Product for Climate Studies.
\newblock {\em Bull. Amer. Meteo. Soc. }, 91, 13-20, 2010.


\end{thebibliography}
\end{document}